\newtheorem{theorem}{Theorem}[section]
\newtheorem{theorem A}{Theorem A}
\newtheorem{theorem B}{Theorem B}
\newtheorem{lemma}[theorem]{Lemma}
\newtheorem{definition}[theorem]{Definition}
\newtheorem{proposition}[theorem]{Proposition}
\begin{document}
\authors
\title{Bogomolov multiplier and the Lazard correspondence}
\author[Z. Araghi Rostami]{Zeinab Araghi Rostami}
\author[M. Parvizi]{Mohsen Parvizi}
\author[P. Niroomand]{Peyman Niroomand}
\address{Department of Pure Mathematics\\
Ferdowsi University of Mashhad, Mashhad, Iran}
\email{araghirostami@gmail.com, zeinabaraghirostami@stu.um.ac.ir}
\address{Department of Pure Mathematics\\
Ferdowsi University of Mashhad, Mashhad, Iran}
\email{parvizi@um.ac.ir}
\address{School of Mathematics and Computer Science\\
Damghan University, Damghan, Iran}
\email{niroomand@du.ac.ir, p$\_$niroomand@yahoo.com}
\address{Department of Mathematics, Ferdowsi University of Mashhad, Mashhad, Iran}
\keywords{Bogomolov Multiplier, Commutativity-preserving defining pair, CP cover, Baker-Campbell-Hausdorff formula, Lazard correspondence.}
\maketitle
\begin{abstract}
In this paper we extend the notion of CP covers for groups to the field of Lie algebras, and show that despite the case of groups, all CP covers of a Lie algebra are isomorphic. Finally we show that CP covers of groups and Lie rings which are in Lazard correspondence, are in Lazard correspondence too, and the Bogomolov multipliers are isomorphic.
\end{abstract}
\section{\bf{Introduction}}
Bogomolov multiplier and commutativity preserving cover or CP cover were first studid by Moravec for the class of finite groups (see \cite{10}, for more information). In the class of groups, the Bogomolov multiplier of a group is unique up to isomorphism but the corresponding CP cover is not necessarily unique. In the recent work \cite{1} we defined the Bogomolov multiplier of Lie algebras, and here, we will define CP covers of Lie algebras, then we will show that all CP covers of a Lie algebra are isomorphic. The Lazard correspondence that was introduced by Lazard in \cite{15}, builds an equivalence of categories between finite $p$-groups of nilpotency class at most $p-1$ and the finite $p$-Lie rings (this means that the additive group of the Lie ring is a finite $p$-group. In other words, the $p$-Lie ring is a Lie algebra over ${\Bbb{Z}}/{{p^k}{\Bbb{Z}}}$ for some positive integer $k$, see \cite{21} for more information) of the same order and nilpotency class. There is a close conection between many invariants of an arbitrary group and a Lie ring that is its Lazard correspondent. For example, the centers, the Schur multipliers and the epicenters of them are isomorphic as abelian groups (see for instance \cite{7}). Finally we will prove that if $G$ a group and $L$ be its Lazard correspondent, then the Bogomolov multipliers of them are isomorphic as abelian groups.
\section{\bf{Bogomolov multiplier and CP cover of Lie algebras}}
The section is devoted to introduce CP covers of Lie algebras and then we will show (unlike the situation in finite groups), all CP covers for a Lie algebra are isomorphic. We recall the following definition.
\begin{definition}\label{d2.1}
%{\bf{Definition 2.1.}}
Let $R$ be a commutative unital ring. A Lie algebra over $R$ is an $R$-module $L$ equipped with an $R$-bilinear map $[.,.] : L\times L \rightarrow L$ which is called the Lie bracket provided that the following conditions hold.
\begin{itemize} 
\item {$[x,x]=0$,} 
\item {$[x,[y,z]]+[z,[x,y]+[y,[z,x]]=0$ (Jaccobi identity), and $[[x,y],z]+[[y,z],x]+[[z,x],y]=0$,}
\item {$[ax+by,z]=a[x,z]+b[y,z]$ and $[z,ax+by]=a[z,x]+b[z,y]$,}
\item {$[x,y]= - [y,x],$}
\end{itemize}
for all $x,y,z \in L$ and $a,b\in R$.
\end{definition}
The Lie bracket $([x,y])$ is called the commutator of $x$ and $y$. 
\\
Note that throughout this section, $L$ will represent a Lie algebra over the field of scalars. Also, the dimension of a Lie algebra is its dimension as a vector space over field, and the cardinality of a minimal generating set of a Lie algebra is always less than or equal to its dimension. 
\\
\\
{\bf{Bogomolov multiplier.}} The Bogomolov multiplier is a group-theorical invariant introduced as an obstruction to the rationality problem in algebraic geometry. Let $K$ be a field, $G$ be a finite group and $V$ be a faithful representation of $G$ over $K$. Then there is natural action of $G$ upon the field of rational functions $K(V)$. The rationality problem (also known as Noether's problem) asks whether the field of $G$-invariant functions ${K(V)}^G$ is rational (purely transcendental) over $K$? A question related to the above mentioned is whether there exist indpendent variables $x_1,...,x_r$ such that ${K(V)}^{G}(x_1,...,x_r)$ becomes a pure transcendental extension of $K$? Saltman in \cite{22} found examples of groups of order $p^9$ with a negative answer to the Noether's problem, even when taking $K=\Bbb{C}$. His main method was the application of the unramified cohomology group ${H_{nr}^{2}}({\Bbb{C}(V)}^{G},{\Bbb{Q}}/{\Bbb{Z}})$ as an obstruction. Bogomolov in \cite{4} proved that it is canonically isomorphic to
$$B_0(G)={\bigcap}\ {\ker} \{ res_{G}^{A} : H^2(G,{\Bbb{Q}}/{\Bbb{Z}}) \rightarrow H^2(A,{\Bbb{Q}}/{\Bbb{Z}}) \},$$
where $A$ is an abelian subgroup of $G$.
The group $B_0(G)$ is a subgroup of the Schur multiplier $\mathcal{M}(G)=H^2(G,{\Bbb{Q}}/{\Bbb{Z}})$ and Kunyavskii in \cite{14} named it the \emph{Bogomolov Multiplier} of $G$. Thus vanishing the Bogomolov multiplier leads to positive answer to Noether's problem. But it's not always easy to calculate Bogomolov multipliers of groups. Moravec in \cite{19} introduced an equivalent definition of the Bogomolov multiplier. In this sense, he used a notion of the nonabelian exterior square $G\wedge G$ of a group $G$ to obtain a new description of the Bogomolov multiplier. He showed that if $G$ is a finite group, then $B_0(G)$ is non-canonically isomorphic to $\text{Hom} (\tilde{B_0}(G),{\Bbb{Q}}/{\Bbb{Z}})$, where the group $\tilde{B_0}(G)$ can be described as a section of the nonabelian exterior square of a group $G$. Also, he proved that $\tilde{B_0}(G)=\mathcal{M}(G)/\mathcal{M}_0(G)$, such that the Schur multiplier $\mathcal{M}(G)$ or the same $H^2(G,{\Bbb{Q}}/{\Bbb{Z}})$ interpreted as the kernel of the commutator homomorphism $G\wedge G \rightarrow [G,G]$ given by $x\wedge y \rightarrow  [x,y]$, and $\mathcal{M}_0(G)$ is the subgroup of $\mathcal{M}(G)$ defined as $\mathcal{M}_0(G)=<x\wedge y \ | \  [x,y]=0 , \  x,y\in G>$. Thus in the class of finite group, $\tilde{B_0}(G)$ is non-canonically isomorphic to $B_0(G)$.
With this definition all truly nontrivial nonuniversal commutator relations is collected into an abelian group that is called Bogomolov multiplier. Furthermore, Moravec's method relates Bogomolov multiplier to the concept of commuting probability of a group and shows that the Bogomolov multiplier plays an important role in commutativity preserving central extensions of groups, that are famous cases in $K$-theory. So, there are some papers to compute this multiplier for some groups. See for example \cite{4,6,12,13,14,16,17,19}. In the recent work \cite{1}, as a close relationship between groups and Lie algebras, we developed the analogus theory of commutativity-preserving exterior product and then Bogomolov multiplier for the class of Lie algebras. 
\\
\\
{\bf{Hopf-type formula for Bogomolov multiplier}} 
\\
We recall Hopf-type formula for groups and Lie algebras as follows. Let $K(F)$ denotes $\{[x,y] | x,y \in F \}$.
\begin{theorem}\label{t2.2}
%{\bf{Theorem 2.3.1.}}
Let $G$ be a group and $L$ be a Lie algebra. Then
\renewcommand {\labelenumi}{(\roman{enumi})} 
\begin{enumerate}
\item{If $G\cong \frac{F_1}{R_1}$ be a presention for $G$, then ${\tilde{B_0}}(G)\cong\frac{(R_1\cap F^{2}_1)}{<K(F_1)\cap R_1>}$},
\item{If $L\cong \frac{F_2}{R_2}$ be a presention for $L$, then ${\tilde{B_0}}(L)\cong \frac{(R_2\cap F^{2}_2)}{<K(F_2)\cap R_2>}$}.
\end{enumerate}
\end{theorem}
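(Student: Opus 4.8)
The plan is to derive both parts from Moravec's identity $\tilde{B_0}(G)\cong \mathcal{M}(G)/\mathcal{M}_0(G)$ together with the classical Hopf formula for the Schur multiplier, and then to run the verbatim analogue in the Lie setting using the Hopf formula for $\mathcal{M}(L)$ established in \cite{1}. I would treat the group case in detail; the Lie-algebra case is obtained by replacing free groups by free Lie algebras, normal subgroups by ideals, and group commutators by Lie brackets, with every commutator identity below having an exact bracket counterpart.

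First I would recall the identification of the nonabelian exterior square with a section of the free group: for the presentation $G\cong F_1/R_1$ one has $G\wedge G\cong F_1^2/[F_1,R_1]$, under which the generator $x\wedge y$ corresponds to the coset $[\tilde{x},\tilde{y}]\,[F_1,R_1]$, where $\tilde{x},\tilde{y}\in F_1$ are arbitrary lifts of $x,y$. A short commutator computation shows this coset is independent of the chosen lifts (two lifts differ by an element of $R_1$, and the resulting discrepancy lies in $[F_1,R_1]$), so the assignment is well defined. The kernel of the commutator map $G\wedge G\to[G,G]$, namely $\mathcal{M}(G)$, is thereby identified with $(R_1\cap F_1^2)/[F_1,R_1]$; this is the classical Hopf formula, and I denote the resulting isomorphism by $\theta$.

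The decisive step is to compute the image $\theta(\mathcal{M}_0(G))$. By definition $\mathcal{M}_0(G)$ is generated by the elements $x\wedge y$ with $[x,y]=1$ in $G$. For such a pair any lifts satisfy $[\tilde{x},\tilde{y}]\in R_1$, hence $[\tilde{x},\tilde{y}]\in K(F_1)\cap R_1$, giving the inclusion $\theta(\mathcal{M}_0(G))\subseteq \langle K(F_1)\cap R_1\rangle/[F_1,R_1]$; conversely, every element of $K(F_1)\cap R_1$ has the form $[\tilde{x},\tilde{y}]$ with $[\tilde{x},\tilde{y}]\in R_1$, so its image $[x,y]$ in $G$ is trivial and $x\wedge y\in\mathcal{M}_0(G)$, which yields the reverse inclusion. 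Thus $\theta(\mathcal{M}_0(G))=\langle K(F_1)\cap R_1\rangle/[F_1,R_1]$. I would also record the containment $[F_1,R_1]\subseteq \langle K(F_1)\cap R_1\rangle$, which holds because each generator $[f,r]$ with $f\in F_1,\ r\in R_1$ already lies in $K(F_1)\cap R_1$ by normality of $R_1$.

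Finally, applying the third isomorphism theorem to $\theta$ gives
\[
\tilde{B_0}(G)\cong \frac{\mathcal{M}(G)}{\mathcal{M}_0(G)}\cong \frac{(R_1\cap F_1^2)/[F_1,R_1]}{\langle K(F_1)\cap R_1\rangle/[F_1,R_1]}\cong \frac{R_1\cap F_1^2}{\langle K(F_1)\cap R_1\rangle},
\]
which is (i); part (ii) follows by the identical argument in the Lie-algebra category, where the bracket $[\tilde{x},\tilde{y}]$ lies in the ideal $R_2$ exactly when $[x,y]=0$ in $L$. I expect the main obstacle to be the bookkeeping in the decisive step: verifying that the exterior-square isomorphism carries the abstract generating set of $\mathcal{M}_0$ precisely onto the commutators lying in $R$, and in particular checking lift-independence and the two matching inclusions cleanly, rather than any deep structural difficulty.
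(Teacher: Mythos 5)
Your proposal is correct and follows essentially the same route as the paper: identify $\mathcal{M}$ via the Hopf formula as $(R\cap F^2)/[R,F]$, show that $\mathcal{M}_0$ corresponds to $\langle K(F)\cap R\rangle/[R,F]$, and pass to the quotient. The only difference is presentational --- you work out the group case in detail and transfer it to Lie algebras by analogy, whereas the paper cites Moravec for the group case and writes out the Lie-algebra computation.
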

\begin{proof}
(i) See \cite[Proposition 3.8]{19}. (ii) from \cite{8}, $L\wedge L \cong {F^{2}_2}/[R_2,F_2]$ and $L^2\cong {F^{2}_2}/{(R_2\cap F^{2}_2)}$. Moreover the map $\tilde{\kappa} : L\wedge L \to {L^2}$ given by $x\wedge y \to [x,y]$ is an epimorphism. Thus, $\ker \tilde{\kappa}=\mathcal{M}(L)\cong (R_2\cap {F^{2}_2})/[R_2,F_2]$ and $\mathcal{M}_0(L)$ can be determined with the subalgebra of ${F_2}/{[R_2,F_2]}$ generated by all the commutators in ${F_2}/{[R_2,F_2]}$ that belong to $\mathcal{M}(L)$. Thus we have the following isomorphism for $\mathcal{M}_0(L)$,
$$<K(\dfrac{F_2}{[R_2,F_2]})\cap \dfrac{R_2}{[R_2,F_2]}> \cong \dfrac{<K(F_2)\cap R_2>+[R_2,F_2]}{[R_2,F_2]} \cong\dfrac{<K(F_2)\cap R_2>}{[R_2,F_2]}.$$
\\
Therefore ${\tilde{B_0}}(L) ={\mathcal{M}(L)}/{\mathcal{M}_0(L)}\cong {R_2\cap F^{2}_2}/{<K(F_2)\cap R_2>}$, as required.
\end{proof}
{\bf{Commutativity preserving extension of groups.}}
For groups, in parallel to the classical theory of central extension, Jezernik and Moravec in \cite{10} developed a version of extension that preserve commutativity. Let $G$, $N$ and $Q$ be groups. An exact sequence of group $1 \xrightarrow{} N \xrightarrow{\chi}  G \xrightarrow{\pi} Q \xrightarrow{} 1$, is called a comutativity preserving extension (CP extension) of $N$ by $Q$, if commuting pairs of elements of $Q$ have commuting lifts in $G$. A special type of CP extensions with the central kernel is named a central CP extension. Jezernik and Moravec in \cite{10} proved that the central exact sequence $1 \xrightarrow{} N \xrightarrow{\chi}  G \xrightarrow{\pi} Q \xrightarrow{} 1$ is a CP extension if and only if $\chi (N) \cap K(G)=1$. Also, a central CP extension $1 \xrightarrow{} N \xrightarrow{\chi} G \xrightarrow{\pi} Q \xrightarrow{} 1$ is termed stem, whenever $\chi(N)\leq G'$, and every stem central CP extension with $|N|=|\tilde{B_0}(Q)|$, is called CP cover. It is proved in \cite[Theorem 4.2]{10}, every finite group has a CP cover and every stem central CP extension is a quotient of a CP cover, and if $G$ be a CP cover of $Q$ with kernel $N$, then $N\cong \tilde{B_0}(Q)$. Now, we investigate analogue statement for Lie algebras, and then we show all CP covers for a finite dimensional Lie algebra are isomorphic.
\\
\\
The following definition is used in the next.
\begin{definition}\label{d2.3}
%{\bf{Definition 2.5.}}
Let $C$ and $\tilde{B_0}$ be Lie algebras. We call a pair of Lie algebras $(C,\tilde{B_0})$, a commutativity preserving defining pair (CP defining pair) for $L$, if
\renewcommand {\labelenumi}{(\roman{enumi})} 
\begin{enumerate}
\item{$L\cong {C}/{\tilde{B_0}}$}
\item{$\tilde{B_0} \subseteq Z(C)\cap {C^2}$}
\item{$\tilde{B_0} \cap K(C)=0$}.
\end{enumerate}
\end{definition}
In the other words, for every stem central CP extension $0 \xrightarrow{} \tilde{B_0} \xrightarrow{}  C \xrightarrow{\pi} L \xrightarrow{} 0$ with $L\cong {C}/{\tilde{B_0}}$, $(C,\tilde{B_0})$ is termed CP defining pair.
\begin{lemma}\label{l2.4}
%{\bf{Lemma 2.6.}}
Let $L$ be a Lie algebra of finite dimension $n$ and $C$ be the first term in a CP defining pair for $L$. Then $\dim C\leq n(n+1)/2$.
\end{lemma}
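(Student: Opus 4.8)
The plan is to translate the desired bound on $\dim C$ into a bound on the central ideal $\tilde{B_0}$, and then to control that via the derived subalgebra $C^2$. Since $L\cong C/\tilde{B_0}$ by condition (i), choosing a vector-space complement of $\tilde{B_0}$ mapping isomorphically onto $L$ gives $\dim C = \dim L + \dim \tilde{B_0} = n + \dim \tilde{B_0}$; in particular it suffices to prove $\dim \tilde{B_0} \le n(n-1)/2 = \binom{n}{2}$. By condition (ii) we have $\tilde{B_0} \subseteq C^2$, so $\dim \tilde{B_0} \le \dim C^2$, and the whole estimate reduces to showing $\dim C^2 \le \binom{n}{2}$.

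To bound $\dim C^2$, I would pick elements $x_1,\dots,x_n \in C$ whose images under the projection $C \to C/\tilde{B_0}\cong L$ form a basis of $L$. Then every $c\in C$ can be written as $c=\sum_i a_i x_i + z$ with scalars $a_i$ and $z\in \tilde{B_0}$. The key step is to exploit the centrality of $\tilde{B_0}$ furnished by condition (ii): for $c=\sum_i a_i x_i + z$ and $c'=\sum_j b_j x_j + z'$, bilinearity of the bracket together with $[z,-]=[-,z']=0$ yields $[c,c']=\sum_{i,j} a_i b_j [x_i,x_j]$. Hence $C^2$ is spanned by the commutators $[x_i,x_j]$, and by antisymmetry $[x_j,x_i]=-[x_i,x_j]$ and $[x_i,x_i]=0$ this spanning set reduces to $\{[x_i,x_j] : 1\le i<j\le n\}$, a set of $\binom{n}{2}$ elements.

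This gives $\dim C^2 \le \binom{n}{2}=n(n-1)/2$, whence $\dim C = n + \dim \tilde{B_0} \le n + n(n-1)/2 = n(n+1)/2$, as required. The argument is essentially a counting one, and I expect no serious obstacle; the only point demanding care is the identity $[c,c']=\sum_{i,j} a_i b_j [x_i,x_j]$, which is exactly where the inclusion $\tilde{B_0}\subseteq Z(C)$ from condition (ii) is indispensable — without centrality the cross terms involving $\tilde{B_0}$ need not vanish and the spanning set for $C^2$ could be larger. Note that condition (iii), $\tilde{B_0}\cap K(C)=0$, plays no role in this particular dimension estimate, and that the finite-dimensionality of $C$ is a consequence of the argument rather than a hypothesis, since the spanning set exhibited for $C^2$ is finite.
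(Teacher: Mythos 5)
Your proposal is correct and follows essentially the same route as the paper: bound $\dim C^2$ by $\binom{n}{2}$ using bilinearity, antisymmetry, and the centrality of what is quotiented out, then add $\dim L = n$. The only cosmetic difference is that you take lifts of a basis of $L\cong C/\tilde{B_0}$ while the paper works with a basis of $C/Z(C)$; since $\tilde{B_0}\subseteq Z(C)$, both arguments exploit exactly the same centrality and yield the same estimate.
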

\begin{proof}
Clearly, $\dim {C}/{Z(C)}\leq \dim {C}/{\tilde{B_0}}=\dim L=n$. Now, if $\{x_1 +Z(C),...,x_n +Z(C)\}$ is a basis for ${C}/{Z(C)}$, then $\{[x_i,x_j] ; 1\leq i<j \leq n\}$ is a generating set for $C^2$ and since $[x_i,x_i]=0$ and $[x_i,x_j]=-[x_j,x_i]$, we have $\dim C^2\leq ({n^2}-n)/2=n(n-1)/2$. Thus $\dim {\tilde{B_0}}\leq n(n-1)/2$ and $\dim C=n+\dim {\tilde{B_0}}\leq n+n(n-1)/2=n(n+1)/2$. 
\end{proof}
A pair $(C,\tilde{B_0})$ is called a maximal CP defining pair if the dimension of $C$ is maximal.
\begin{definition}\label{d2.5}
%{\bf{Definition 2.7.}}
For a maximal CP defining pair $(C,\tilde{B_0})$, $C$ is called a commutativity preserving cover or (CP cover) for $L$. 
\end{definition}
The following definition is used for finding the Hopf-type formula for $\tilde{B_0}$, where $(C,\tilde{B_0})$ is a maximal CP defining pair, and it is used to prove the uniqueness of the CP covers of a Lie algebra.
\begin{definition}\label{d2.6}
%{\bf{Definition 2.8.}}
Let
\\
$c(L)=\{(C,\lambda) \ | \   \lambda \in \text{Hom}(C,L)\  , \    
\text{$\lambda$ surjective and} \  \ker \lambda \subseteq {C^2}\cap Z(C) \ , \  $
\\
$\ker \lambda \cap K(C)=0 \}$. \\
$(T,\tau)$ is called a universal member in $c(L)$ if for each $(C,\lambda)\in c(L)$, there exist a ${h'}\in \text{Hom}(T,C)$ such that ${\lambda}o{h'}=\tau$, in other words the following diagram commutes.
\[
\xymatrix{
T \ar[r]^{\tau} \ar[d]_{h'}& L\\
C\ar[ru]_{\lambda}&
}
\]
\end{definition}
It can be shown that, CP defining pairs and elements of $c(L)$ are related in the following sence. 
\\
Let $(C,\sigma)\in c(L)$, so $\ker \sigma \subseteq Z(C)\cap {C^2}$, $\ker\sigma \cap K(C)=0$ and $L\cong {C}/{\ker{\sigma}}$. Therefore $(C,\ker {\sigma})$ is a CP defining pair for $L$. Conversely, if $(C,N)$ is a CP defining pair for $L$, then there is a surjective homomorphism $\sigma : C\to L$ with $\ker {\sigma} = N\subseteq Z(C)\cap {C^2}$ and $N\cap K(C)=0$. Thus $(C,\sigma)\in c(L)$.
\\
\\
Now, we want to show that all CP covers  of a Lie algebra are isomorphic. First, we recall the following lemma. 
\begin{lemma}\cite[Lemma 1.4]{3}\label{l2.7}
%{\bf{Lemma 2.9.[3. Lemma 1.4]}}
Let $(N,\mu)\in c(L)$ and $\lambda \in \text{Hom}(C,L)$ where $\lambda$ is surjective. Suppose that $\beta \in \text{Hom}(C,N)$ such that ${\mu}o{\beta}=\lambda$, then $\beta$ is surjective.
\end{lemma}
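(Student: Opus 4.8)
The plan is to prove surjectivity of $\beta$ by showing that its image $M=\beta(C)$, a subalgebra of $N$, exhausts $N$. First I would extract what surjectivity of $\lambda$ gives at the level of images. Since $\mu\circ\beta=\lambda$ and $\lambda$ is surjective, applying $\mu$ to $M$ recovers all of $L$:
$$\mu(M)=\mu(\beta(C))=\lambda(C)=L=\mu(N).$$
Because $\mu$ is a surjective homomorphism, this forces the decomposition $N=M+\ker\mu$: given $n\in N$, pick $m\in M$ with $\mu(n)=\mu(m)$, so that $n-m\in\ker\mu$.

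The crucial step then exploits the hypothesis $\ker\mu\subseteq Z(N)$ coming from $(N,\mu)\in c(L)$. Since $\ker\mu$ is central in $N$, every commutator involving an element of $\ker\mu$ vanishes, so the cross terms $[M,\ker\mu]$ and $[\ker\mu,\ker\mu]$ collapse to zero and we obtain
$$N^2=[M+\ker\mu,\,M+\ker\mu]=[M,M]=M^2.$$
In particular $N^2=M^2\subseteq M$. Finally I would invoke the other half of the kernel condition, namely $\ker\mu\subseteq N^2$. Chaining the containments gives $\ker\mu\subseteq N^2=M^2\subseteq M$, and substituting this into the decomposition of the first step yields $N=M+\ker\mu=M$, which is exactly the surjectivity of $\beta$.

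I expect the commutator computation $N^2=M^2$ to be the only real point of the argument: it is precisely the centrality of $\ker\mu$ that does the work, annihilating the mixed brackets so that the derived subalgebra does not see the kernel at all. Everything else is bookkeeping with images and sums. It is worth remarking that only the two conditions $\ker\mu\subseteq Z(N)$ and $\ker\mu\subseteq N^2$ are used here; the third defining property $\ker\mu\cap K(N)=0$ of membership in $c(L)$ is not needed for this particular lemma, and will instead be reserved for the later uniqueness and maximality arguments.
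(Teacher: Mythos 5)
Your proof is correct; the paper gives no proof of this lemma at all, simply citing Batten's thesis \cite[Lemma 1.4]{3}, and your argument --- writing $N=\beta(C)+\ker\mu$ from surjectivity of $\lambda$, using centrality of $\ker\mu$ to collapse $N^2$ to $\beta(C)^2$, and then chaining $\ker\mu\subseteq N^2\subseteq\beta(C)$ --- is exactly the standard argument for that result. Your closing observation that the condition $\ker\mu\cap K(N)=0$ plays no role here is also accurate.
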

We are going to show that $c(L)$ has a universal element and they are precisely those elements $(T,\tau)$ where $T$ is a CP cover of $L$.
\begin{proposition}\label{p2.8}
%{\bf{Proposition 2.10.}}
Let $L$ be a finite dimensional Lie algebra. $(T,\tau)$ is a universal element of $c(L)$ if and only if $T$ is a CP cover.
\end{proposition}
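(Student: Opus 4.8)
The plan is to produce one explicit universal element of $c(L)$ from a free presentation and then deduce the stated equivalence from Lemma~\ref{l2.7} together with the dimension bound of Lemma~\ref{l2.4}. First I would fix a presentation $L\cong F/R$ with $F$ a free Lie algebra, put $S=\langle K(F)\cap R\rangle$ (so that, by Theorem~\ref{t2.2}(ii), $R\cap F^2$ modulo $S$ is $\tilde{B_0}(L)$), and set $T_0=F/S$ with $\tau_0\colon T_0\to L$ induced by $F\to L$. I would check $(T_0,\tau_0)\in c(L)$: $\tau_0$ is onto; since $[R,F]\subseteq\langle K(F)\cap R\rangle=S$ the kernel $R/S$ is central; it lies in $T_0^2=F^2/S$ because (taking $F$ on a minimal generating set) $R\subseteq F^2$; and if a commutator $[x,y]+S$ lies in $R/S$ then $[x,y]\in K(F)\cap R\subseteq S$, giving $\ker\tau_0\cap K(T_0)=0$. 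Here the three conditions match the three clauses defining $c(L)$ exactly.

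For universality, take any $(C,\lambda)\in c(L)$. Freeness of $F$ lets me lift the projection $F\to L$ to $\phi\in\text{Hom}(F,C)$ with $\lambda\phi$ the projection, so $\phi(R)\subseteq\ker\lambda$. The key point is that for a generating commutator $[x,y]\in K(F)\cap R$ one has $\phi([x,y])=[\phi x,\phi y]\in K(C)$ and also in $\ker\lambda$, hence $\phi([x,y])\in K(C)\cap\ker\lambda=0$; thus $\phi(S)=0$ and $\phi$ descends to $h'\colon T_0\to C$ with $\lambda h'=\tau_0$. (Without a minimal presentation one instead lets $S$ be a complement of $\tilde{B_0}(L)$ inside $R/\langle K(F)\cap R\rangle$ and first corrects $\phi$ by a linear map on the free generators valued in the central ideal $\ker\lambda$, which kills the complementary part without disturbing brackets.) This shows $(T_0,\tau_0)$ is a universal element of $c(L)$, and moreover $\ker\tau_0\cong\tilde{B_0}(L)$.

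Now the equivalence follows formally. For the forward direction, let $(T,\tau)$ be universal and let $C$ be a CP cover, i.e. a first term of a CP defining pair of maximal dimension; such exist because $c(L)\neq\emptyset$ (the pair $(L,\mathrm{id})$ works) and $\dim C$ is bounded by Lemma~\ref{l2.4}. Universality gives $h\colon T\to C$ with $\sigma h=\tau$ for the structure map $(C,\sigma)\in c(L)$, and Lemma~\ref{l2.7} forces $h$ to be surjective, so $\dim T\ge\dim C$; as $T$ is itself a first term of a CP defining pair while $C$ is of maximal dimension, equality holds, $h$ is an isomorphism, and $T$ is a CP cover. For the converse, let $T$ be a CP cover. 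The universal $(T_0,\tau_0)$ just built is, by the forward direction, also a CP cover, so $\dim T_0=\dim T$; applying its universality to $(T,\tau)\in c(L)$ yields $g\colon T_0\to T$ with $\tau g=\tau_0$, which is surjective by Lemma~\ref{l2.7} and hence an isomorphism by the dimension equality. Transporting the universal property of $T_0$ along $g^{-1}$ (compose any map $T_0\to C$ with $g^{-1}$) shows $(T,\tau)$ is universal. The main obstacle is the construction and verification in the first two paragraphs; everything after it is bookkeeping with Lemma~\ref{l2.7} and dimensions, and the decisive ingredient is the clause $\ker\lambda\cap K(C)=0$, which is precisely what makes the commutator relations in the kernel die and the lift descend.
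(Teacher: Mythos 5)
Your overall strategy --- prove the forward implication with Lemma~\ref{l2.7} and a dimension count, and obtain the converse by exhibiting one explicit universal element and transporting its universal property along an isomorphism --- is essentially the paper's: your forward direction coincides with the printed proof, and your $T_0$ is the paper's $B_L/E_L$, whose construction the paper defers to Lemmas~\ref{l2.10}--\ref{l2.13}. The gap is in your construction of $T_0$. You set $T_0=F/S$ with $S=\langle K(F)\cap R\rangle$ and justify $\ker\tau_0\subseteq T_0^2$ by claiming that a presentation on a minimal generating set forces $R\subseteq F^2$. That holds only when the minimal number of generators of $L$ equals $\dim L/L^2$, i.e.\ essentially in the nilpotent case; for a perfect Lie algebra such as $\mathfrak{sl}_2$ one has $F/(R+F^2)\cong L/L^2=0$, hence $R+F^2=F$ and $R\not\subseteq F^2$, so $(F/S,\tau_0)\notin c(L)$. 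Since the section treats arbitrary finite-dimensional Lie algebras, your main line breaks exactly at the point where the paper introduces a complement $E_L$ of $D_L=(R\cap F^2)/S$ inside $A_L=R/S$ and passes to $B_L/E_L$.

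Your parenthetical fallback points in the right direction but is not yet a proof. First, the object needed is a complement of $D_L$ in $A_L$, not ``a complement of $\tilde{B_0}(L)$ inside $R/\langle K(F)\cap R\rangle$'' (this conflates $D_L\cong\tilde{B_0}(L)$ with its complement, and also silently redefines your $S$). Second, and more seriously, the complement annihilated by the induced map $\sigma_1\colon B_L\to C$ depends on the target: Lemma~\ref{l2.11}(iii) only gives $A_L=D_L+\ker\sigma_1$, so for each $(C,\lambda)$ one can find a complement inside $\ker\sigma_1$, but it need not be the fixed $E_L$ used to define $T_0$. You must therefore either carry out the announced correction of $\phi$ on the free generators by central elements of $\ker\lambda$ explicitly (verifying that it preserves $\lambda\phi=\pi$ and the values on $F^2$, and that it can be arranged to kill a basis of the fixed $E_L$), or argue, as the paper does in Lemma~\ref{l2.15} via the splitting $B_L\cong E_L\oplus K_L$ and Lemma~\ref{l2.14}, that all such complements yield isomorphic quotients compatibly over $L$. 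Without one of these steps the universality of $T_0$, and with it your converse direction, is not established.
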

\begin{proof}
Let $(T,\tau)\in c(L)$ such that for each $(C,\lambda)\in c(L)$, there is $\rho \in \text{Hom}(T,C)$ such that ${\lambda}o{\rho}=\tau$. By using Lemma \ref{l2.7}, $\rho$ is surjective and $\dim C\leq \dim T$. Thus $T$ is a CP cover of $L$. Also, since every CP cover of $L$ is the homomorphic image of $T$ and has the same dimension as $T$, so it is isomorphic to $T$. Moreover by using Lemma \ref{l2.7}, any CP cover of $L$, gives an universal element in $c(L)$.
\end{proof}
\begin{proposition}\label{p2.9}
%{\bf{Proposition 2.11.}}
Let $L$ be a finite dimensional Lie algebra, then all CP covers of $L$ are isomorphic.
\end{proposition}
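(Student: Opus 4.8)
The plan is to deduce the statement directly from Proposition \ref{p2.8}, which identifies CP covers with the universal elements of $c(L)$, reinforced by the surjectivity criterion of Lemma \ref{l2.7}. The guiding principle is the familiar one that a universal object is unique up to isomorphism; the only extra ingredient needed to make this concrete in our setting is a dimension count, since by Definition \ref{d2.5} and Lemma \ref{l2.4} every CP cover realizes one and the same maximal dimension. Thus, once two CP covers are compared through the universal property, surjectivity together with equality of dimensions will force the comparison map to be an isomorphism.

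First I would fix two arbitrary CP covers $C_1$ and $C_2$ of $L$. By the correspondence recorded just before Lemma \ref{l2.7}, each of them determines an element of $c(L)$: one chooses surjective homomorphisms $\lambda_1\in\text{Hom}(C_1,L)$ and $\lambda_2\in\text{Hom}(C_2,L)$ with $\ker\lambda_i\subseteq Z(C_i)\cap C_i^2$ and $\ker\lambda_i\cap K(C_i)=0$, so that $(C_1,\lambda_1),(C_2,\lambda_2)\in c(L)$. Since $C_1$ and $C_2$ are CP covers, Proposition \ref{p2.8} tells us that both $(C_1,\lambda_1)$ and $(C_2,\lambda_2)$ are universal elements of $c(L)$. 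Next I would invoke the universality of $(C_1,\lambda_1)$ against the element $(C_2,\lambda_2)$ of $c(L)$: there is $\rho\in\text{Hom}(C_1,C_2)$ with $\lambda_2\circ\rho=\lambda_1$. Because $(C_2,\lambda_2)\in c(L)$ and $\lambda_1$ is surjective, Lemma \ref{l2.7} then guarantees that $\rho$ itself is surjective.

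Finally I would upgrade this surjection to an isomorphism. As both $C_1$ and $C_2$ are CP covers, each attains the maximal admissible dimension of Lemma \ref{l2.4}, so $\dim C_1=\dim C_2<\infty$. A surjective linear map between finite-dimensional vector spaces of equal dimension is bijective, hence $\rho$ is a bijective Lie algebra homomorphism, giving $C_1\cong C_2$. I expect the only real point requiring care — the main obstacle — to be the bookkeeping of directions: one must apply the universal property of the right cover so that the resulting map falls under the hypotheses of Lemma \ref{l2.7}; with the roles of $(C_1,\lambda_1)$ and $(C_2,\lambda_2)$ arranged as above, the equality of dimensions closes the argument with no further computation.
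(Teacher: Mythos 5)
Your proposal is correct and follows essentially the same route as the paper: the paper's own argument (spread between the proofs of Proposition \ref{p2.8} and Proposition \ref{p2.9}) likewise obtains a surjection from a universal element onto any CP cover via Lemma \ref{l2.7} and concludes by equality of (maximal) dimensions. Your version merely makes explicit the comparison of two arbitrary covers, which the paper leaves implicit.
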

\begin{proof}
By using Proposition \ref{p2.8}, since there is a universal element in $c(L)$, all CP covers of $L$ are isomorphic.  
\end{proof}
To find the Hopf-type formula for $\tilde{B_0}$, when $(C,\tilde{B_0})$ is a maximal CP defining pair of $L$, let $L\cong F/R$ be a free presention of a finite dimensional Lie algebra $L$, $A_L={R}/{<K(F)\cap R>}$, $B_L={F}/{<K(F)\cap R>}$ and $D_L=({{F^2}\cap R})/{<K(F)\cap R>}$. We will show that there is a central ideal $E_L$ of $B_L$ complement to $D_L$ in $A_L$, also there are $\lambda$, $\tilde{\sigma}$ and $\tilde{\pi}$, such that $({B_L}/{E_L} , \overline{\pi})$ is an universal element of $c(L)$, $\overline{\sigma}\in \text{Hom}({B_L}/{E_L}, C)$ and $\overline{\pi}={\lambda}o{\overline{\sigma}}$. Also ${B_L}/{E_L}$ is a CP cover of $L$ and $({B_L}/{E_L} , \ker{\overline{\pi}})$ is a maximal CP defining pair for $L$.
\\
\\
Since $(C,\tilde{B_0})$ is a maximal CP defining pair, there is a surjective map $\lambda : C\rightarrow L$ such that $\ker{\lambda}=\tilde{B_0}\subseteq Z(C)\cap {C^2}$, ${\tilde{B_0}}\cap K(C)=0$ and $(C,\lambda)\in c(L)$. By using Lemma \ref{l2.7}, $\sigma$ is surjective. 
On the other hand, we have the following commutative diagram.
\[
\xymatrix{
F \ar[r]^{\pi} \ar[d]_{\sigma}& L\\
C\ar[ru]_{\lambda}&
}
\]
In the following lemmas, we show that $\sigma$ induces ${\sigma}_{1}\in \text{Hom}(B_L,C)$.
\begin{lemma}\label{l2.10}
%{\bf{Lemma 2.12.}}
Let $L\cong F/R$ be a free presention of $L$, then for every $x\in F$, we have $x\in R$ if and only if $\sigma(x) \in \ker \lambda$. Moreover $<K(F)\cap R>\subseteq \ker{\sigma}$, and $\sigma$ induces a surjective homomorphism ${{\sigma}_1}\in \text{Hom}(B_L,C)$ such that ${\lambda}o{{\sigma}_1}={\pi}_1$.
\end{lemma}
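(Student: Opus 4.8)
The plan is to obtain $\sigma_1$ by factoring the lift $\sigma$ through the quotient that defines $B_L$, after dispatching the two preliminary assertions. I note at the outset that $\sigma$ exists and is surjective by the construction preceding the lemma (via Lemma \ref{l2.7}), and that $\lambda \circ \sigma = \pi$ by the commutative triangle; these are the only facts about $\sigma$ that I will need.

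First I would verify the biconditional, which is purely formal. Since $R = \ker \pi$ and $\lambda \circ \sigma = \pi$, for any $x \in F$ we have $x \in R \iff \pi(x) = 0 \iff \lambda(\sigma(x)) = 0 \iff \sigma(x) \in \ker \lambda$. No computation is required; the statement merely transports membership in $R$ across the diagram to membership in $\ker \lambda = \tilde{B_0}$.

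The one step with actual content is the containment $\langle K(F) \cap R\rangle \subseteq \ker \sigma$, and this is exactly where the defining hypothesis is consumed. Let $[x,y] \in K(F) \cap R$ be a typical generator. From $[x,y] \in R$ the biconditional gives $\sigma([x,y]) \in \ker \lambda = \tilde{B_0}$; but $\sigma$ is a homomorphism of Lie algebras, so $\sigma([x,y]) = [\sigma(x), \sigma(y)] \in K(C)$ as well. Hence $\sigma([x,y]) \in \tilde{B_0} \cap K(C)$, which vanishes by condition (iii) of the defining pair. Thus every generator of $\langle K(F) \cap R\rangle$ lies in $\ker \sigma$, and since $\ker \sigma$ is an ideal it contains the entire ideal they generate. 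I would stress that this is the precise place where the commutativity-preserving condition $\tilde{B_0} \cap K(C) = 0$ is used: it is the intersection with the \emph{set} of commutators $K(C)$, not merely the centrality of the kernel, that drives the commutator relators into $\ker \sigma$.

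Finally, with $\langle K(F) \cap R\rangle \subseteq \ker \sigma$ established, the universal property of the quotient delivers a well-defined homomorphism $\sigma_1 : B_L \to C$ with $\sigma_1(x + \langle K(F) \cap R\rangle) = \sigma(x)$. Its surjectivity is inherited directly from that of $\sigma$, and evaluating on a representative of each coset gives $\lambda \circ \sigma_1 = \pi_1$, where $\pi_1$ is the epimorphism induced by $\pi$ on $B_L$ (well defined because $\langle K(F) \cap R\rangle \subseteq R$). There is no genuine obstacle here beyond correctly pinning the commutator relators inside $\tilde{B_0} \cap K(C)$; once that is done the remainder is the routine factor-through-a-quotient argument.
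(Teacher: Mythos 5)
Your proof is correct, and at the one step that carries real content it takes a different (and in fact more careful) route than the paper. The paper proves the containment $\langle K(F)\cap R\rangle\subseteq\ker\sigma$ by observing that $\sigma(r)\in\ker\lambda\subseteq Z(C)$ for $r\in R$, so that $\sigma([r,f])=[\sigma(r),\sigma(f)]=0$ for $r\in R$, $f\in F$; this kills $[R,F]$ (and $[R,R]$) but only accounts for those generators of $\langle K(F)\cap R\rangle$ of the form $[x,y]$ with one entry already in $R$. A general generator is a commutator $[x,y]$ with $x,y\in F$ arbitrary and only the bracket itself lying in $R$, and centrality of $\ker\lambda$ says nothing about it --- indeed, if $[R,F]$ exhausted $\langle K(F)\cap R\rangle$ the Bogomolov multiplier would coincide with the Schur multiplier. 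Your argument handles exactly these generators: $[x,y]\in R$ forces $\sigma([x,y])\in\ker\lambda=\tilde{B_0}$ by the biconditional, while $\sigma([x,y])=[\sigma(x),\sigma(y)]\in K(C)$, so the CP condition $\tilde{B_0}\cap K(C)=0$ annihilates it. This is the right use of hypothesis (iii) of the defining pair, and it is precisely the ingredient the paper's proof leaves implicit (or omits). The remaining parts --- the formal biconditional from $\lambda\circ\sigma=\pi$, the factorization through the quotient, surjectivity of $\sigma_1$, and the identity $\lambda\circ\sigma_1=\pi_1$ on coset representatives --- match the paper's treatment.
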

\begin{proof}
Let $x\in R$. Then $0=\pi(x)={\lambda}o{\sigma(x)}$. Thus $\sigma(x)\in \ker{\lambda}$. On the other hand, let $\sigma(x)\in \ker {\lambda}$, then $\lambda(\sigma(x))=0$. It implies that $\pi(x)=0$. So, $x\in \ker {\pi}=R$. Now, since $\sigma(r)\in \ker {\lambda}\subseteq Z(C)\cap{C^2}\subseteq Z(C)$ and $\sigma(f)\in C$, for all ${r_1},{r_2},r\in R$ and $f\in F$, we have $\sigma([r,f])=[\sigma(r),\sigma(f)]=0$ and $\sigma([r_1,r_2])=[\sigma(r_1),\sigma(r_2)]=0$. Thus $<K(F)\cap R>\subseteq \ker {\sigma}$. Hence $\sigma$ induces ${{\sigma}_1}\in \text{Hom}(B_L,C)$ and ${\lambda}o{{{\sigma}_1}(f+<K(F)\cap R>)}={\lambda}o{\sigma(f)}=\pi (f)={{\pi}_1}(f+<K(F)\cap R>)$. 
Therefore ${\lambda}o{{\sigma}_1}={\pi}_1$. One can see that ${\sigma}_1$ is surjective. So, we have following commutative diagrams
\[
\xymatrix{
{B_L} \ar[r]^{{\pi}_1} \ar[d]_{{\sigma}_1}& L\\
C\ar[ru]_{\lambda}&
.}
\]
\end{proof}
%Using the notations and assumptions in Lemma \ref{l2.10}, we have
\begin{lemma}\label{l2.11}
%{\bf{Lemma 2.13.}} 
Using the notations and assumptions in Lemma 2.12, we have                                                                                
\renewcommand {\labelenumi}{(\roman{enumi})}
\begin{enumerate}
\item{${{\sigma}_1}(A_L)=\ker{\lambda}$}
\item{${{\sigma}_1}(D_L)=\ker {\lambda}$}
\item{$A_L={D_L}+\ker {{\sigma}_1}$}.
\end{enumerate}
\end{lemma}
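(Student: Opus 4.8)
The plan is to lean almost entirely on Lemma~\ref{l2.10}, whose characterization ``$x\in R$ if and only if $\sigma(x)\in\ker\lambda$'', combined with the surjectivity of $\sigma_1$ and the identity $\lambda\circ\sigma_1=\pi_1$, will do the bulk of the work. I would prove (i) first, then sharpen the same argument to obtain (ii), and finally deduce (iii) as a formal consequence of (i) and (ii).

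For (i), recall $A_L=R/\langle K(F)\cap R\rangle$, so a typical element is $r+\langle K(F)\cap R\rangle$ with $r\in R$, sent by $\sigma_1$ to $\sigma(r)$. The inclusion $\sigma_1(A_L)\subseteq\ker\lambda$ is immediate from Lemma~\ref{l2.10}, since $r\in R$ forces $\sigma(r)\in\ker\lambda$. For the reverse inclusion I would take $y\in\ker\lambda$ and use surjectivity of $\sigma_1$ to write $y=\sigma(x)$ for some $x\in F$; then $\sigma(x)\in\ker\lambda$ gives $x\in R$ by Lemma~\ref{l2.10}, so $y$ is the image of a class lying in $A_L$.

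For (ii), the inclusion $\sigma_1(D_L)\subseteq\ker\lambda$ follows from $D_L\subseteq A_L$ and part (i). The extra input for the reverse inclusion is that $\ker\lambda=\tilde{B_0}\subseteq C^2$, and since $\sigma_1$ is a surjective homomorphism, $C^2=\sigma_1(B_L^2)=\sigma_1\bigl(F^2/\langle K(F)\cap R\rangle\bigr)$, using $\langle K(F)\cap R\rangle\subseteq F^2$ (as $K(F)\subseteq F^2$). Thus any $y\in\ker\lambda$ equals $\sigma(w)$ with $w\in F^2$; applying Lemma~\ref{l2.10} once more yields $w\in R$, hence $w\in F^2\cap R$ and $y\in\sigma_1(D_L)$. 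This step---recognizing that $\ker\lambda$ sits inside $C^2$ and then pulling it back along $\sigma_1$ into $F^2$---is the crux of the lemma, and the one place where the ``$\subseteq C^2$'' clause of a CP defining pair is genuinely used; I expect it to be the main obstacle, though it is conceptual rather than computational.

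Finally, for (iii) I would first observe $\ker\sigma_1\subseteq A_L$: if $\sigma_1(b)=0$ then $\pi_1(b)=\lambda(\sigma_1(b))=0$, so $b\in\ker\pi_1$, and since $\pi_1$ is induced by $\pi$ with $\ker\pi=R$ we have $\ker\pi_1=A_L$. Together with $D_L\subseteq A_L$ this gives $D_L+\ker\sigma_1\subseteq A_L$. For the reverse inclusion, given $a\in A_L$, parts (i) and (ii) yield $\sigma_1(a)\in\ker\lambda=\sigma_1(D_L)$, so $\sigma_1(a)=\sigma_1(d)$ for some $d\in D_L$; then $a-d\in\ker\sigma_1$ and $a=d+(a-d)\in D_L+\ker\sigma_1$. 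Beyond the crux in (ii), the remaining difficulty is purely bookkeeping---keeping the induced maps and their kernels straight, in particular confirming $\ker\pi_1=A_L$ and $B_L^2=F^2/\langle K(F)\cap R\rangle$---rather than anything deep.
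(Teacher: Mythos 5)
Your proposal is correct and follows essentially the same route as the paper: the paper likewise proves (i) from $\lambda\circ\sigma_1=\pi_1$ together with $\ker\pi_1=A_L$ and the surjectivity of $\sigma_1$, proves (ii) by pulling $\ker\lambda\subseteq C^2=\sigma_1(B_L^2)$ back into $B_L^2\cap A_L=D_L$, and deduces (iii) exactly as you do. The only cosmetic difference is that you invoke the ``$x\in R$ iff $\sigma(x)\in\ker\lambda$'' equivalence of Lemma~\ref{l2.10} where the paper re-derives the same fact via $\ker\pi_1=A_L$.
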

\begin{proof}
(i) Let $y\in {{\sigma}_1}(A_L)$, then $y={{\sigma}_1}(a)$ for some $a\in {A_L}$. We have $\ker{{\pi}_1}=A_L$. so, ${\lambda}o{{\sigma}_1}(a)={{\pi}_1}(a)=0$. Hence $y\in \ker{\lambda}$. Now, let $m\in \ker{\lambda}$, then there is $b\in {B_L}$ such that ${{\sigma}_1}(b)=m$, since ${\sigma}_1$ is surjective and $0=\lambda(m)={\lambda}o{{{\sigma}_1}(b)}={{\pi}_1}(b)$, $b\in \ker{{\pi}_1}=A_L$. Thus, ${{\sigma}_1}(A_L)=\ker{\lambda}$. 
\\
(ii) Clearly ${D_L}\subseteq {A_L}$, and (i) implie ${{\sigma}_1}(D_L)\subseteq \ker {\lambda}$.  Let $y\in \ker{\lambda}$. Since $(C,\lambda)\in c(L)$ and $y\in C^2=\sigma(F^2)={{\sigma}_1}({B^{2}_L})$, there is $z\in {B^{2}_L}$ such that $y={{\sigma}_1}(z)$. Since $z\in {A_L}$, $z\in {B^{2}_L}\cap {A_L}=D_L$. Hence, $\ker{\lambda}\subseteq {{\sigma}_1}(D_L)$.
\\
(iii) Let $a\in {A_L}$. by using (i) and (ii), ${{\sigma}_1}(a)\in \ker{\lambda}={{\sigma}_1}(D_L)$. Therefore there is $d\in {D_L}$ such that ${{\sigma}_1}(a)={{\sigma}_1}(d)$. So, ${{\sigma}_1}(a-d)=0$, and $a=d+e$ for some $e\in\ker{{\sigma}_1}$. Thus ${A_L}\subseteq {D_L}+\ker{{\sigma}_1}$. On the other hand, we have ${{\sigma}_1}(x)=0$, for some $x\in \ker{{\sigma}_1}$. Therefore ${{\pi}_1}(x)={\lambda}o{{\sigma}_1}(x)=0$. So, $x\in \ker{{\pi}_1}={A_L}$. Hence $\ker{{\sigma}_1}\subseteq{A_L}$. Since ${D_L}\subseteq {A_L}$, ${D_L}+\ker{{\sigma}_1} \subseteq {A_L}$. So $A_L={D_L}+\ker {{\sigma}_1}$.
\end{proof}
Note that since $A_L={D_L}+\ker {{\sigma}_1}$ and $\ker {{\sigma}_1}={\ker{\sigma}}/{<K(F)\cap R>}$, $\ker {{\sigma}_1}$ has a finite dimensional Lie subalgebra as ${({\ker {\sigma}}\cap {R\cap {F^2}})}/{<K(F)\cap R>}$. \\ Also, ${\ker {\sigma}}/{({\ker {\sigma}}\cap R\cap {F^2})} \cong {R}/{R\cap {F^2}}$ is abelian (${L}/{L^2}\cong {F}/({{R+F}^2})$ and $({{F}/{F^2}})/({{F}/({R+F^2})})\cong ({R+F^2})/{F^2} \cong  {R}/{R\cap F^2}$). Put $E_L={R}/{R\cap {F^2}}$, clearly it is a central ideal of $A_L$. Therefore $A_L$ is a central extension of $D_L$ by the abelian Lie algebra $E_L$, and this extension splits. So $A_L={D_L}\oplus {E_L}$.\\ \\
On the other hand $[R,F]\leq<K(F)\cap R>$ and $A_L$ and $D_L$ are central ideals of $B_L$. Thus, $[D_L+\ker{{\sigma}_1},B_L]=0$ and $[\ker{{\sigma}_1},B_L]=0$. Now since ${E_L}\leq{\ker{{\sigma}_1}}$, $[E_L,B_L]=0$. So, $E_L$ is a central ideal of $B_L$. Thus, ${\sigma}_1$ and ${\pi}_1$ induce $\bar{\sigma}\in \text{Hom}({B_L}/{E_L},C)$ and $\bar{\pi}\in \text{Hom}({B_L}/{E_L},L)$, respectively. Moreover the following diagram is commutative.
\[
\xymatrix{
\frac{B_L}{E_L} \ar[r]^{\bar{\pi}} \ar[d]_{\bar{\sigma}}& L\\
C\ar[ru]_{\lambda}&
}
\]
Using the previous notations, the following Lemmas show that $({B_L}/{E_L},{A_L}/{E_L})$ is a maximal CP defining pair for $L$.
\begin{lemma}\label{l2.12}
%{\bf{Lemma 2.14.}} 
Let $L$ be a finite dimensional Lie algebra, and $L\cong F/R$ for a free Lie algebra $F$. Then $({B_L}/{E_L},{A_L}/{E_L})$ is a CP defining pair for $L$, where $E_L$ is any complementary subspace to $D_L$ in $A_L$.
\end{lemma}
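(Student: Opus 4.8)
The plan is to verify the three conditions of Definition~\ref{d2.3} directly for the pair $(C,\tilde{B_0})=(B_L/E_L,\,A_L/E_L)$, using throughout the inclusions $E_L\subseteq A_L\subseteq B_L$ together with the facts already established above: that $A_L=D_L\oplus E_L$, that $A_L$, $D_L$ and $E_L$ are central ideals of $B_L$, and that $[R,F]\subseteq\langle K(F)\cap R\rangle$. Condition (i) is the quickest: by the third isomorphism theorem $(B_L/E_L)/(A_L/E_L)\cong B_L/A_L$, and since $B_L/A_L=(F/\langle K(F)\cap R\rangle)/(R/\langle K(F)\cap R\rangle)\cong F/R\cong L$, we obtain $L\cong C/\tilde{B_0}$.

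For condition (ii) I would argue the two containments separately. Centrality, $A_L/E_L\subseteq Z(B_L/E_L)$, is inherited immediately from the fact that $A_L$ is a central ideal of $B_L$. To see $A_L/E_L\subseteq (B_L/E_L)^2$, I would use the splitting $A_L=D_L\oplus E_L$ to identify $A_L/E_L$ with the image of $D_L$ in $B_L/E_L$; since $D_L=(F^2\cap R)/\langle K(F)\cap R\rangle$ lies inside the image of $F^2$ in $B_L$, which is exactly $B_L^2$, passing to the quotient by $E_L$ gives $A_L/E_L\subseteq (B_L/E_L)^2$.

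The real work, and what I expect to be the main obstacle, is condition (iii): $(A_L/E_L)\cap K(B_L/E_L)=0$. I would take an element $\bar a$ of this intersection and write it simultaneously as $\alpha+E_L$ with $\alpha\in A_L$ and as a single commutator $[\bar x,\bar y]$ in $B_L/E_L$. Lifting $\bar x,\bar y$ to $x,y\in B_L$ yields $\alpha-[x,y]\in E_L\subseteq A_L$, and since $\alpha\in A_L$ this forces the commutator $[x,y]$ to lie in $A_L$, i.e.\ in the image of $R$. Lifting once more to the free algebra, with $x=f+\langle K(F)\cap R\rangle$ and $y=g+\langle K(F)\cap R\rangle$, the condition $[x,y]\in A_L$ translates into $[f,g]\in R$. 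The decisive observation is then that $[f,g]$ is a single commutator, hence lies in $K(F)$, so $[f,g]\in K(F)\cap R\subseteq\langle K(F)\cap R\rangle$; therefore $[x,y]=0$ already in $B_L$, whence $\bar a=[\bar x,\bar y]=0$. This last step is precisely where the choice of the relation subalgebra $\langle K(F)\cap R\rangle$ (rather than $[R,F]$, as in the ordinary Hopf formula for the Schur multiplier) does the commutativity-preserving work, and it is the point requiring the most care. A small but essential subtlety to keep in mind is that $K(B_L/E_L)$ denotes the \emph{set} of genuine single commutators, not the subalgebra they generate, so the two-stage lifting applies verbatim to a single bracket $[\bar x,\bar y]$ and no further generation argument is needed.
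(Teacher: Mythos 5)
Your proposal is correct, and for conditions (i) and (ii) it follows exactly the paper's route: the third isomorphism theorem gives $({B_L}/{E_L})/({A_L}/{E_L})\cong B_L/A_L\cong F/R\cong L$, centrality of $A_L/E_L$ is inherited from $A_L\subseteq Z(B_L)$, and the containment in the derived subalgebra comes from $D_L\subseteq B_L^2$ together with $A_L=D_L\oplus E_L$. The one place you genuinely diverge is condition (iii): the paper simply asserts $({A_L}/{E_L})\cap K({B_L}/{E_L})=0$ at the end of its proof with no argument, whereas you supply the two-stage lifting argument — a single commutator $[\bar x,\bar y]$ lying in $A_L/E_L$ lifts to $[x,y]\in A_L$, hence to $[f,g]\in R$ in the free algebra, and since $[f,g]\in K(F)$ it already dies in $\langle K(F)\cap R\rangle$, forcing $[x,y]=0$ in $B_L$. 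That argument is correct (the key points you handle properly are that $E_L\subseteq A_L$ so $[x,y]\in A_L$ follows, and that $K(\cdot)$ is the set of single commutators rather than the subalgebra they generate), and it fills in the only step of the lemma that actually uses the defining feature of $\langle K(F)\cap R\rangle$; in that respect your write-up is more complete than the paper's.
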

\begin{proof}
Since ${A_L}\subseteq Z(B_L)$ and ${A_L}/{E_L} \subseteq Z({B_L}/{E_L})$, we have
$$\dfrac{{B_L}/{E_L}}{{A_L}/{E_L}}\cong \dfrac{B_L}{A_L}\cong \dfrac{F}{R}\cong L,$$
and
$${D_L}=\dfrac{{F^2}\cap R}{<K(F)\cap R>}\subseteq \dfrac{F^2}{<K(F)\cap R>}\cong (\dfrac{F}{<K(F)\cap R>})^{2}={{B^2_L}}.$$
Hence
$$\dfrac{A_L}{E_L}\cong \dfrac{{D_L}+{E_L}}{E_L}\subseteq \dfrac{{{B^{2}_L}}+{E_L}}{E_L}=(\dfrac{B_L}{E_L})^2.$$
Thus, ${A_L}/{E_L}\subseteq Z({B_L}/{E_L})\cap {({B_L}/{E_L})^2}$ and $({A_L}/{E_L})\cap K({B_L}/{E_L})=0$.
\end{proof}
\begin{lemma}\label{l2.13}
%{\bf{Lemma 2.15.}}
${B_L}/{E_L}$ is a CP cover of $L$ and $\tilde{B_0}\cong (F^2 \cap R)/<K(F)\cap R>$.
\end{lemma}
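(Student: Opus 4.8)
The plan is to establish the two assertions in turn: first that $B_L/E_L$ is a CP cover, by pinching its dimension against the fixed maximal pair $C$, and then to read off the Bogomolov multiplier as the kernel of $\bar\pi$. By Lemma \ref{l2.12} the pair $(B_L/E_L,A_L/E_L)$ is a CP defining pair for $L$, so its first term cannot exceed the maximal dimension attained among such pairs, namely $\dim C$, where $(C,\tilde{B_0})$ is the fixed maximal CP defining pair. On the other hand, the homomorphism $\bar\sigma\in\text{Hom}(B_L/E_L,C)$ appearing in the commutative diagram above is induced from $\sigma_1$, which is surjective by Lemma \ref{l2.10}; hence $\bar\sigma$ is itself surjective and $\dim(B_L/E_L)\ge\dim C$.

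Combining the two inequalities forces $\dim(B_L/E_L)=\dim C$, so the surjection $\bar\sigma$ is an isomorphism and $B_L/E_L\cong C$. Since $C$ realises the maximal dimension, $B_L/E_L$ is a maximal CP defining pair and therefore a CP cover of $L$ by Definition \ref{d2.5}; equivalently, one may observe that $(B_L/E_L,\bar\pi)$ is a universal element of $c(L)$ and invoke Proposition \ref{p2.8}, which together with Proposition \ref{p2.9} is consistent with all CP covers of $L$ being isomorphic.

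For the second assertion, the kernel of this maximal defining pair is $\tilde{B_0}=\ker\bar\pi=A_L/E_L$, since $\ker\pi_1=A_L$ and $E_L$ is a central ideal of $B_L$. Using the splitting $A_L=D_L\oplus E_L$ established just before Lemma \ref{l2.12}, we obtain
$$\tilde{B_0}=\frac{A_L}{E_L}\cong D_L=\frac{F^2\cap R}{<K(F)\cap R>},$$
which is exactly the expression furnished by the Hopf-type formula of Theorem \ref{t2.2}(ii).

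The main obstacle is not the closing computation but the bookkeeping that makes the dimension pinch legitimate: one must be sure that $\bar\sigma$ descends to a well-defined map on $B_L/E_L$ (guaranteed by $E_L\subseteq\ker\sigma_1$, already checked since $E_L$ is central in $B_L$) and that it is genuinely \emph{surjective}, which is precisely what Lemmas \ref{l2.10} and \ref{l2.11} secure. One must also note that the complement $E_L$ is chosen independently of the fixed cover $C$, so that $(B_L/E_L,A_L/E_L)$ is a bona fide CP defining pair whose dimension is controlled by Lemma \ref{l2.4}. Once surjectivity of $\bar\sigma$ onto the maximal $C$ is in hand, the equality of dimensions, the isomorphism, and hence both claims follow automatically.
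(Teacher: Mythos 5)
Your proof is correct and follows essentially the same route as the paper: both arguments pinch $\dim(B_L/E_L)$ between $\dim C$ (via maximality of $C$ and Lemma \ref{l2.12}) and the surjectivity of $\bar\sigma$, conclude $B_L/E_L\cong C$ is a CP cover, and then identify $\tilde{B_0}$ with $A_L/E_L\cong D_L=(F^2\cap R)/\langle K(F)\cap R\rangle$. The only slight wrinkle is your parenthetical claim that $E_L\subseteq\ker\sigma_1$ follows from centrality of $E_L$ in $B_L$ — the paper's setup actually runs the implication the other way ($E_L\le\ker\sigma_1$ is arranged in the construction of the complement and is then used to deduce $[E_L,B_L]=0$) — but this does not affect the validity of the argument.
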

\begin{proof}
$C$ is a CP cover of $L$, so $\dim C\geq \dim ({{B_L}/{E_L}})$. Since $\bar{\sigma}$ is surjective, $C$ is the homomorphic image of ${B_L}/{E_L}$ and $\dim C \leq \dim ({{B_L}/{E_L}})$. Therefore $\dim C = \dim ({{B_L}/{E_L}})$ and ${B_L}/{E_L}$ is a CP cover of $L$. Now by using Propositions \ref{p2.8} and \ref{p2.9}, $({B_L}/{E_L},\tilde{\pi})$ is an universal element of $c(L)$ and $C\cong {B_L}/{E_L}$. Now since ${C}/\tilde{B_0}\cong L\cong {B_L}/{A_L}$ and $D_L \cong {A_L}/{E_L}$, $\dim \tilde{B_0}= \dim {D_L}$. And so $\tilde{B_0}\cong {D_L}= ({{F^2}\cap R})/{<K(F)\cap R>}$.
\end{proof}
The following key lemma is used in the next investigation.
\begin{lemma}\cite[Lemma 1.11]{3}\label{l2.14}
%{\bf{Lemma 2.16.[3. Lemma 1.11]}}
Let $B,D,B_1,D_1$ be Lie algebras and $B\oplus D={B_1}\oplus {D_1}$. If $B\cong B_1$ and $B$ is finite dimensional, then $D\cong D_1$.
\end{lemma}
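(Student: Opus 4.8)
The plan is to work inside the common Lie algebra $M := B\oplus D = B_1\oplus D_1$, where each summand is an ideal and $[B,D]=[B_1,D_1]=0$. Since for an ideal direct sum one has $D\cong M/B$ and $D_1\cong M/B_1$, the statement is equivalent to $M/B\cong M/B_1$. If $B$ and $B_1$ were the \emph{same} subalgebra of $M$ this would be immediate, so the whole content of the lemma is that $B$ and $B_1$ are only abstractly isomorphic and may be embedded in $M$ in genuinely different ways. I would therefore try to replace one embedding by the other, that is, to show that $B$ itself can serve as a complement to $D_1$.

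Concretely, let $p_1\colon M\to B_1$ be the projection with kernel $D_1$ and consider its restriction $p_1|_B\colon B\to B_1$. If this map is an isomorphism, then $B\cap D_1=\ker(p_1|_B)=0$, and $B+D_1=M$ (because $p_1(M)=B_1=p_1(B)$ forces every $m\in M$ into $B+\ker p_1$); since $B$ and $D_1$ are ideals meeting trivially, automatically $[B,D_1]\subseteq B\cap D_1=0$, so $M=B\oplus D_1$ as ideals, and hence $D_1\cong M/B\cong D$, finishing the proof. Thus everything reduces to arranging that a projection of $B$ onto a complement of $D_1$ is bijective.

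The engine for this is the finite dimensionality of $B$. Fixing an isomorphism $\theta\colon B\to B_1$, the composite $f:=\theta^{-1}\circ(p_1|_B)$ is an endomorphism of the finite dimensional Lie algebra $B$ with $\ker f=B\cap D_1$, to which I would apply a Fitting-type decomposition together with induction on $\dim B$: on the part where $f$ is invertible the argument of the previous paragraph applies verbatim, while the part where $f$ is nilpotent records exactly the overlap $B\cap D_1$ and can be stripped off and treated inductively. I expect the main obstacle to be precisely this bookkeeping. Because the category of Lie algebras carries no additive structure on morphisms, the familiar module-theoretic cancellation theorems (stable range one, or the exchange property of objects with semiperfect endomorphism ring) cannot be quoted directly, and the replacement of $B_1$ by $B$ has to be carried out by hand, with the finiteness of $\dim B$ being what guarantees that the induction terminates and that the Fitting summands remain ideals of $M$ after the successive reductions. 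In the situation in which the lemma is actually applied the summands are central, since $A_L\subseteq Z(B_L)$, so $[B,B_1]=0$ and the whole matter collapses to linear algebra over the ground field; this is a useful consistency check on the general argument and suggests that the delicate step above is only needed for the fully non-abelian formulation.
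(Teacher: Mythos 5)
Your reduction is set up correctly: identifying $D\cong M/B$, $D_1\cong M/B_1$, and observing that everything follows once $B$ is shown to be a complement to $D_1$ (equivalently, once $p_1|_B\colon B\to B_1$ is bijective) is a sound first step, and your treatment of that case is complete. But the case $B\cap D_1\neq 0$ is the entire content of the lemma --- it genuinely occurs, already for $M$ two-dimensional abelian with $B=\langle x\rangle$, $D=D_1^{\phantom{1}}=\langle y\rangle$ swapped against $B_1=\langle y\rangle$, $D_1=\langle x\rangle$ --- and for that case you offer only a plan, which you yourself flag as the ``main obstacle.'' The plan does not work as described. The Fitting decomposition of $B$ relative to $f=\theta^{-1}\circ(p_1|_B)$ splits $B$ as a vector space into $\ker(f^n)\oplus\operatorname{im}(f^n)$; the kernel part is an ideal of $B$, but the image part is only a subalgebra, the bracket between the two pieces need not vanish, and neither piece need be an ideal of $M$. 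Your assertion that finite dimensionality ``guarantees that the Fitting summands remain ideals of $M$'' is exactly the unproved (and in general false) claim on which the induction rests. Nor is the inductive step itself set up: if one instead tries to strip off the overlap by passing to $M/(B\cap D_1)$, the images of $B$ and $B_1$ there have different dimensions, so the hypothesis $B\cong B_1$ is destroyed and the inductive hypothesis cannot be invoked.

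Two further remarks. First, the closing claim that in the paper's application the problem ``collapses to linear algebra'' because $E_L,E'_L$ are central is not right: centrality of the cancelled summands makes $p_1|_B$ easier to analyse, but one still has to produce a Lie algebra isomorphism between the complements $K_L$ and $K'_L$, which are non-abelian, so a dimension count or linear exchange argument does not suffice. Second, for calibration: the paper itself gives no argument for this statement --- it is quoted verbatim from Batten's thesis \cite[Lemma 1.11]{3} --- so there is no in-paper proof to match; judged on its own, your submission is an honest outline whose decisive step is both missing and, as currently conceived, resting on a false structural claim about Fitting decompositions of Lie algebras.
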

Note that since $\dim{L}=n$ and $F$ is generated by $n$ elements, $E_L$ has finite dimensional.
\begin{lemma}\label{l2.15}
%{\bf{Lemma 2.17.}}
All CP covers of $L$ are isomorphic to ${B_L}/{E_L}$.
\end{lemma}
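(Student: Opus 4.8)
The goal is to show that every CP cover of $L$ is isomorphic to ${B_L}/{E_L}$. The key observation is that we have already accumulated all the structural ingredients needed; what remains is to package them together using the decomposition lemma (Lemma \ref{l2.14}). First I would recall from the discussion preceding Lemma \ref{l2.12} that $A_L = D_L \oplus E_L$ as Lie algebras (the central extension of $D_L$ by the abelian $E_L$ splits), and that by Lemma \ref{l2.13} we have the identification $\tilde{B_0}\cong D_L=({F^2}\cap R)/{<K(F)\cap R>}$. Lemma \ref{l2.13} also establishes that ${B_L}/{E_L}$ is itself a CP cover of $L$, so it suffices to fix an arbitrary CP cover $C$ and prove $C\cong {B_L}/{E_L}$.

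\textbf{Main argument.}
Let $C$ be any CP cover of $L$, realized as the first term of a maximal CP defining pair $(C,\tilde{B_0})$. By Proposition \ref{p2.8} the CP cover ${B_L}/{E_L}$ furnishes a universal element $({B_L}/{E_L},\bar{\pi})$ of $c(L)$, and by Proposition \ref{p2.9} all CP covers are isomorphic, so in fact $C\cong {B_L}/{E_L}$ follows immediately once we know both are CP covers. The more informative route, which I would carry out to make the identification explicit and to match the hypotheses of Lemma \ref{l2.14}, is through the decomposition of $A_L$. The point is that $A_L$ can be written in two ways: on the one hand $A_L = D_L\oplus E_L$ from the splitting established earlier, and on the other hand, tracking the surjection $\sigma_1\colon B_L\to C$ together with the relation $A_L = D_L + \ker\sigma_1$ from Lemma \ref{l2.11}(iii), one obtains a second direct decomposition of $A_L$ in which one summand is isomorphic to the kernel $\tilde{B_0}$ of the CP cover $C$.

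\textbf{Applying Lemma \ref{l2.14}.}
With $A_L$ expressed as $B\oplus D = B_1\oplus D_1$ in these two compatible ways, and with one pair of corresponding summands already known to be isomorphic and finite dimensional (recall the remark before Lemma \ref{l2.15} that $E_L$ is finite dimensional, and Lemma \ref{l2.13} gives $\dim C=\dim({B_L}/{E_L})$, hence $\dim\tilde{B_0}=\dim D_L$), Lemma \ref{l2.14} forces the complementary summands to be isomorphic as well. Combined with the equality of dimensions $\dim C=\dim({B_L}/{E_L})$ from Lemma \ref{l2.13} and the surjectivity of $\bar{\sigma}$, this yields $C\cong {B_L}/{E_L}$.

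The step I expect to require the most care is verifying that the two direct-sum decompositions of $A_L$ genuinely have matching finite-dimensional summands so that Lemma \ref{l2.14} applies cleanly; in particular one must confirm that the summand arising from $\ker\sigma_1$ (which contains $E_L$) is complementary to $D_L$ in the same way $E_L$ is, rather than merely containing it. Once the finite-dimensionality bookkeeping is in place, the cancellation lemma does the rest and the isomorphism $C\cong{B_L}/{E_L}$ is immediate.
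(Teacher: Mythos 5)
You have correctly located the key tool (the cancellation Lemma \ref{l2.14}) and the general strategy of comparing two direct-sum decompositions, but the cancellation is applied to the wrong algebra. Decomposing $A_L$ as $D_L\oplus E_L$ and again as $D_L\oplus E'_L$ (where $E'_L$ is the complement attached to a second CP cover $N$) and cancelling the copies of $D_L\cong\tilde{B_0}$ only yields $E_L\cong E'_L$ --- which is true for free, since both complements are abelian of the same finite dimension --- and an abstract isomorphism $E_L\cong E'_L$ between two central ideals of $B_L$ does not by itself imply $B_L/E_L\cong B_L/E'_L$. That implication is precisely the content of the lemma. The paper supplies the missing step by decomposing $B_L$ itself: since $D_L\subseteq B_L^2$ and $B_L^2\cap E_L=0$, one writes $Z(B_L)=(B_L^2\cap Z(B_L))\oplus E_L\oplus A$, hence $B_L= T\oplus E_L\oplus A=E_L\oplus K_L$, and similarly $B_L=E'_L\oplus K'_L$. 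Only now does Lemma \ref{l2.14}, applied to $B_L=E_L\oplus K_L=E'_L\oplus K'_L$ with $E_L\cong E'_L$ finite dimensional, give $K_L\cong K'_L$, i.e. $B_L/E_L\cong B_L/E'_L\cong N$. You flag exactly this point as the one ``requiring the most care,'' but the proposal does not carry it out, and it is the substance of the proof.

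Your opening shortcut --- citing Proposition \ref{p2.9} (all CP covers are isomorphic) together with Lemma \ref{l2.13} ($B_L/E_L$ is a CP cover) --- is formally a one-line deduction from results stated earlier, but it would make Lemma \ref{l2.15} vacuous; Proposition \ref{p2.9} rests on Proposition \ref{p2.8}'s assertion that every CP cover yields a universal element of $c(L)$, and the existence of the required lifting homomorphisms is exactly what the $B_L/E_L$ construction and the present lemma are establishing. The decomposition argument should therefore be regarded as the actual proof of uniqueness rather than a corollary of the earlier propositions.
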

\begin{proof}
Let $(N,\tilde{B_0})$ be a maximal CP defining pair of $L$. So there is a surjective map $\beta: N\rightarrow L$ such that $(N,\beta)\in c(L)$. ُSimilar to the previous statement, there is a central ideal ${E'_L}$ that is complementary to $D_L$ in $A_L$, and ${{\sigma}'_1}\in \text{Hom}(B_L,N)$ such that ${E'_L}\leq \ker{{{\sigma}'_1}}$ and ${\beta}o{{\sigma}'_1}={\pi}_1$. On the other hand, ${D_L}\subseteq {{B^{2}_L}}$ and ${B^{2}_L}\cap {E_L}=0$. So we can write $Z(B_L)={B^{2}_L}\cap Z(B_L)\oplus E_L \oplus A$ where $A$ is abelian and ${B^{2}_L}\cap A=0$. Thus $B_L \cong T\oplus E_L\oplus A $, where $T$ is non-abelian. Therefore $B_L\cong E_L\oplus K_L$, such that $K_L=T\oplus A$. Similarly, there is a Lie algebra ${K'_L}$ such that $B_L\cong {E'_L}\oplus {K'_L}$. Also, $E_L$ and ${E'_L}$ are abelian Lie algebras and both are of the same finite dimensional, so $E_L\cong {E'_L}$. By using Lemma \ref{l2.14}, $K_L\cong {K'_L}$. Thus ${B_L}/{E_L}\cong {B_L}/{{E'_L}}$.  
\end{proof}
Therefore we showed that for every finite dimensional Lie algebra $L$, there is a CP cover as $\frac{B_L}{E_L}$, and for every CP defining pair $(C,\tilde{B_0})$, $C$ is isomorphic to a quotient of $\frac{B_L}{E_L}$ $(C\cong \frac{B_L}{{\ker{{\sigma}_1}}} \cong \frac{{\frac{B_L}{E_L}}}{{\frac{\ker{{\sigma}_1}}{E_L}}})$. Also, since $A_L=D_L+\ker{{\sigma}_1}$ and $\tilde{B_0} \cong \frac{A_L}{\ker{{\sigma}_1}}$, $\tilde{B_0}$ is isomorphic to a quotient of Bogomolov multiplier of $L$.
\section{\bf{Bogomolov multiplier and the Lazard correspondence}}
The section is devoted to show the Bogomolov multiplier of a Lie ring $L$ and a group $G$ is isomorphic, when $L$ is Lazard correspondent of $G$. We recall the following definition.
\begin{definition}\label{d3.1}
%{\bf{Definition 3.1.}}
A Lie ring is defined as an abelian group $L$ equipped with a $\Bbb{Z}$-bilinear map $[.,.]: L\times L \rightarrow L$ called the Lie bracket satisfying the following conditions 
\begin{itemize} 
\item {$[x,x]=0$,} 
\item {$[x,[y,z]]+[z,[x,y]+[y,[z,x]]=0$ and $[[x,y],z]+[[y,z],x]+[[z,x],y]=0$ (Jaccobi identity),}
\item {$[x+y,z]=[x,z]+[y,z]$ and $[x,y+z]=[x,y]+[x,z]$,}
\item {$[x,y]= - [y,x]$,}
\end{itemize}
for all $x,y,z\in L$.
\end{definition}
The Lie bracket $([x,y])$ is called the commutator of $x$ and $y$.
\\
%Subrings, ideals, homomorphisms, free Lie rings and (finite) presentions of Lie rings are defined as usual (see [---], for more informations).
%The ring $L$ is a free Lie ring if there exist only those relations between its elements which are consequences of the postulates. It follows that a free Lie ring is uniquely determined by devoting a set of free generators $x_1,...,x_n$, and that it consists of all formal polynomials $L(x_1,...,x_n)$ in these generators. 
\\
Let $L$ be a Lie ring and $M$ and $N$ are subrings of it, we define $[M,N]$ as the Lie subring of $L$ generated by all commutators $[m,n]$ with $m\in M$ and $n\in N$. This allows us to define the lower central series $L = L^1 \geq L^2 \geq L^3 ≥ ...$ via $L^i = [L^{i-1}, L]$. The Lie ring $L$ is nilpotent if this series terminates at ${0}$, and in this case, the class $cl(L)$ is the length of the lower central series of $L$.
\\
\\
Note that a Lie ring which is also an algebra over a field (or a commutative unital ring) is termed a Lie algebra over that field (or commutative unital ring). Also a Lie ring can be defined as a $\Bbb{Z}$-Lie algebra (see \cite{9}), and $p$-Lie ring is a Lie algebra over ${\Bbb{Z}}/{{p^k}{\Bbb{Z}}}$ for some positive integer $k$ (see \cite{21}). Therefore more definitions and proofs of Lie rings can be obtained as generalizations from the Lie algebras, and there are similar results between finite Lie rings and finite dimensional Lie algebras over a field. So similar to recent work \cite{1}, we have the Bogomolov multiplier for Lie rings. Also we want to introduce CP defining pairs and CP covers of Lie rings.
\\
\\
In the following, for a given Lie ring $L$, the set $\{[x,y] | x,y \in L \}$ of all commutators of $L$ is denoted by $K(L)$. Also, the notations are the same as notations in the previous section.
\begin{definition}\label{d3.2}
%{\bf{Definition 3.2.}}
Let $C$ and $K$ be finite Lie rings, a pair $(C,K)$ is called a commutativity preserving defining pair (CP defining pair) for $L$, provided that 
$L\cong {C}/{K}$, $K \subseteq Z(C)\cap {C^2}$ and $K \cap K(C)=0$.
\end{definition}
\begin{lemma}\label{l3.3}
%{\bf{Lemma 3.3.}}
Let $L$ be a Lie ring of finite order $n$ and $C$ be the first term in a CP defining pair for $L$. Then $| C |\leq {n^2}(n-1)$.
\end{lemma}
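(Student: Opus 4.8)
The plan is to mirror the dimension count of Lemma \ref{l2.4}, but working multiplicatively with orders in place of additively with dimensions. Since $L\cong C/K$ we have $|C|=|C/K|\cdot|K|=n\,|K|$, so everything reduces to bounding $|K|$. As $K\subseteq C^2$, it suffices to bound $|C^2|$, and the shape of the target $n^2(n-1)=n\cdot n(n-1)$ suggests aiming for the intermediate estimate $|C^2|\le n(n-1)$, after which $|C|=n\,|K|\le n\,|C^2|\le n^2(n-1)$ follows at once.

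First I would set $\bar C=C/Z(C)$ and observe that, because $K\subseteq Z(C)$, the quotient $\bar C$ is a homomorphic image of $C/K\cong L$; hence $|\bar C|$ divides $n$ and in particular $|\bar C|\le n$. Next, choosing representatives $x_1,\dots,x_t$ (with $t=|\bar C|\le n$) of the elements of $\bar C$ and using $\mathbb{Z}$-bilinearity of the bracket, every commutator $[x,y]$ depends only on the cosets $x+Z(C)$ and $y+Z(C)$, so the set $K(C)$ of commutator values is exhausted by the $[x_i,x_j]$; together with $[x,x]=0$ and $[x,y]=-[y,x]$ this leaves at most $\binom{t}{2}\le\binom{n}{2}$ essentially distinct nonzero commutators, and these generate $C^2$.

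The step I expect to be the main obstacle is passing from this count of commutator \emph{generators} to a bound on the \emph{order} $|C^2|$. In the field setting of Lemma \ref{l2.4} this passage is immediate, since dimension is additive and a bound on the number of generators bounds $\dim C^2$ directly by $n(n-1)/2$. Over $\mathbb{Z}$ this transfer is not automatic: the generators $[x_i,x_j]$ may carry large additive order, and the naive estimate only yields $|C^2|\le|\,(C/Z(C))\wedge(C/Z(C))\,|$, which is not a priori controlled by $n(n-1)$. The crux is therefore to bound the order, not merely the number, of the commutators; here I would try to exploit the two hypotheses still unused, namely the centrality of $K$ together with $K\cap K(C)=0$, and the fact that each $[x_i,x_j]$ is annihilated by $\mathrm{ord}(x_i+Z(C))$, in order to secure $|C^2|\le n(n-1)$ and thereby complete the estimate.
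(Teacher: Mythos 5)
Your reduction is exactly the one the paper uses: from $K\subseteq Z(C)$ one gets $|C/Z(C)|\le |C/K|=n$, and from $K\subseteq C^2$ together with $|C|=|L|\,|K|=n|K|$ everything hinges on the single inequality $|C^2|\le n(n-1)$. But your proposal stops precisely at that point: you count at most $\binom{n}{2}$ essentially distinct commutators $[x_i,x_j]$ (using that a commutator depends only on the cosets modulo $Z(C)$), you correctly observe that over $\mathbb{Z}$ a bound on the \emph{number of generators} of $C^2$ does not bound its \emph{order}, and you then only say you ``would try'' to exploit $K\cap K(C)=0$ and the additive orders of the $\bar x_i$ to force $|C^2|\le n(n-1)$. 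No such argument is supplied, so the decisive step is missing and the proof is incomplete.

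The obstacle you flagged is genuine, not a technicality. From $|C/Z(C)|\le n$ alone the inequality $|C^2|\le n(n-1)$ is simply false for Lie rings: the free class-two nilpotent Lie ring $C=V\oplus \Lambda^2V$ over $\mathbb{Z}/2$ with $V=(\mathbb{Z}/2)^d$ has $|C/Z(C)|=2^d$ while $|C^2|=2^{\binom{d}{2}}$, which exceeds $2^d(2^d-1)$ already for $d=5$. So any correct argument must genuinely invoke the remaining hypotheses ($K\subseteq C^2$, $K\cap K(C)=0$, $|C/K|=n$) and not merely the commutator count; the partial observation that each $[x_i,x_j]$ is annihilated by $\mathrm{ord}(\bar x_i)$ is not enough, since multiplying such bounds over $\binom{n}{2}$ generators vastly overshoots $n(n-1)$. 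For what it is worth, the paper's own proof does not cross this gap either: it asserts $|K|\le|C^2|\le n(n-1)$ with no justification beyond $|C/Z(C)|\le n$, which is exactly the leap you (rightly) declined to make. So your proposal mirrors the paper's route and correctly isolates its weak point, but as a proof it is not complete.
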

\begin{proof}
Clearly, $| {C}/{Z(C)} | \leq | {C}/K |=| L |=n$. So $| K | \leq | {C^2} | \leq n(n-1)$. Since $| C |=| L | | K |$, $| C |\leq {n^2}(n-1)$. 
\end{proof}
Therefore if $L$ is a finite Lie ring, the order of the first coardinate of CP defining pairs for $L$ are bounded, and a pair $(C,K)$ is called a maximal CP defining pair, if the order of $C$ is maximal.
\begin{definition}\label{d3.4}
%{\bf{Definition 3.4.}}
For a maximal CP defining pair $(C,K)$, $C$ is called a commutativity preserving cover or (CP cover) for $L$. 
\end{definition}
Similar to the proofs in the previous section, it can be proven that for every finite Lie ring $L$, there is a unique CP cover, and for every CP defining pair $(C,K)$, $C$ and $K$ are isomorphic to a quotient of CP cover and Bogomolov multiplier of $L$, respectively. Also for every maximal CP defining pair as $(C,K)$, we have the following Lemma.
\begin{lemma}\label{l3.5}
%{\bf{Lemma 3.5.}}
Let $(C,K)$ be a maximal CP defining pair for finite Lie ring $L$. Then $K$ is isomorphic to the Bogomolov multiplier of $L$.
\end{lemma}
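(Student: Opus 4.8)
The plan is to mirror the argument developed for Lie algebras in Section 2, transporting each step to the finite Lie ring setting where "dimension over a field" is replaced by "order" and direct-sum decompositions of vector spaces become decompositions of finite abelian groups. First I would fix a free presentation $L\cong F/R$ of the finite Lie ring $L$ and introduce the analogous objects $A_L=R/\langle K(F)\cap R\rangle$, $B_L=F/\langle K(F)\cap R\rangle$ and $D_L=(F^2\cap R)/\langle K(F)\cap R\rangle$, together with the class $c(L)$ of pairs $(C,\lambda)$ with $\lambda$ surjective, $\ker\lambda\subseteq Z(C)\cap C^2$ and $\ker\lambda\cap K(C)=0$. Exactly as in Definition \ref{d2.6} and Lemma \ref{l2.7}, a universal element of $c(L)$ exists and coincides with a CP cover; the proofs of Lemma \ref{l2.10} and Lemma \ref{l2.11} are purely arrow-theoretic and carry over verbatim, yielding $A_L=D_L+\ker\sigma_1$.

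Next I would establish the splitting $A_L=D_L\oplus E_L$, where $E_L$ is a complement of $D_L$ in $A_L$. The key point is that $A_L/(A_L\cap R\cap F^2)\cong R/(R\cap F^2)$ is an abelian Lie ring, so $A_L$ is a central extension of $D_L$ by an abelian group $E_L\cong R/(R\cap F^2)$; since $A_L$ is a \emph{finite} abelian group sitting inside the center of $B_L$, this extension splits as abelian groups, and because $E_L$ is central in $B_L$ one may pass to the quotient $B_L/E_L$. Following Lemma \ref{l2.12} and Lemma \ref{l2.13}, the pair $(B_L/E_L,\,A_L/E_L)$ is then a CP defining pair, and a counting argument ($\,|C|\le|B_L/E_L|$ from surjectivity of $\bar\sigma$, together with $|C|\ge|B_L/E_L|$ from maximality of the CP cover $C$) forces $|C|=|B_L/E_L|$, so $B_L/E_L$ is a CP cover and $K\cong D_L=(F^2\cap R)/\langle K(F)\cap R\rangle$. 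By the Hopf-type formula of Theorem \ref{t2.2}(ii), adapted to Lie rings as announced in Section 3, this last quotient is precisely $\tilde{B_0}(L)$, which finishes the identification.

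The main obstacle is justifying the splitting $A_L=D_L\oplus E_L$ in the finite Lie ring setting rather than over a field. Over a field the complement exists by elementary linear algebra, but here $A_L$ is only a finite abelian group, so I would invoke the fact that a central extension of an abelian group by an abelian group need not split in general, and check carefully that the extension $0\to D_L\to A_L\to E_L\to 0$ does split. The cleanest route is to observe that $E_L\cong R/(R\cap F^2)$ is a subgroup of $F/F^2\cong L/L^2$, and to choose a lift compatible with the decomposition $Z(B_L)=\bigl(B_L^2\cap Z(B_L)\bigr)\oplus E_L\oplus A$ used later in the Lie algebra proof of Lemma \ref{l2.15}. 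Once a complement $E_L$ is produced, the finite analogue of Lemma \ref{l2.14} (decompositions of finite abelian, or more generally finite nilpotent, Lie rings are cancellable when one summand is isomorphic) guarantees that the CP cover $B_L/E_L$ is independent of the chosen complement, completing the proof that every maximal CP defining pair has $K\cong\tilde{B_0}(L)$.
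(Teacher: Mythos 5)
Your overall route matches the paper's: the lemma is proved by transporting the Section~2 machinery to Lie rings and identifying $K$ with $D_L=(F^2\cap R)/\langle K(F)\cap R\rangle$, which is $\tilde{B_0}(L)$ by the Hopf-type formula. The paper's written proof is shorter at the final step: rather than re-running the counting argument of Lemma~\ref{l2.13}, it computes directly that $\bar{\sigma}(D_L)=\sigma(R\cap F^2)=\sigma(R)\cap\sigma(F^2)=K\cap C^2=K$ and combines this surjection with $|D_L|=|K|$; note that the inclusion $\sigma(R)\cap\sigma(F^2)\subseteq\sigma(R\cap F^2)$ needs $\ker\sigma\subseteq R$, which comes from the analogue of Lemma~\ref{l2.10}.

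There is, however, a genuine flaw in your handling of the one step you correctly single out as non-automatic, namely the existence of a complement $E_L$ to $D_L$ in $A_L$. First, $A_L$ is not a finite abelian group: since $L$ is finite, $(R+F^2)/F^2$ has finite index in $F/F^2\cong\mathbb{Z}^n$, so $A_L/D_L\cong R/(R\cap F^2)$ is free abelian of rank $n$ and $A_L$ is infinite whenever $n\geq 1$. Second, even for finite abelian groups the sequence $0\to D_L\to A_L\to A_L/D_L\to 0$ need not split, as you yourself concede, and announcing that you would ``check carefully that it does split'' is not an argument. The correct justification is exactly the observation you gesture at but do not exploit: $A_L/D_L\cong R/(R\cap F^2)\cong (R+F^2)/F^2$ is isomorphic to a subgroup of the free abelian group $F/F^2$ (not of $L/L^2\cong F/(R+F^2)$, as you wrote), hence is itself free abelian, hence projective as a $\mathbb{Z}$-module, so the extension splits. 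With that repair --- and with the cancellation of Lemma~\ref{l2.14} applied to the finitely generated free summand $E_L\cong\mathbb{Z}^n$ rather than to a ``finite'' one --- your argument does go through and yields the same conclusion $K\cong D_L\cong\tilde{B_0}(L)$.
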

\begin{proof}
Similar to the Lemma \ref{l2.13} and by using the previous assumptions, $|D_L|=|K|$. Since $\lambda o \sigma=\pi$, $\sigma (F)\in K=\ker {\lambda}$ is equivalent to $F\in R$. Also
$$\bar{\sigma}(D_L)=\sigma (R\cap {F^2})=\sigma (R) \cap \sigma (F^2)=K\cap {C^2}=K.$$
So, ${\bar{\sigma}}|_{D_L}: {D_L}\rightarrow {K}$ is a surjective, and since $|D_L|=|K|$, $K\cong {D_L}$.
\end{proof}
 In the following, we introduce Lazard correspondence between finite $p$-Lie rings of nilpotency class at most $p-1$ and the finite $p$-groups of the same order and nilpotency class. Also, to avoid confusion, in a group $G$, we denote the multiplication by $xy$ and the commutator is used to $[x,y]_{G}=x^{-1}y^{-1}xy$. 
\\
\\
{\bf{The Baker-Campbell-Hausdorff formula (B-C-H) and its inverse.}} Let $L$ be a $p$-Lie ring of order $p^n$ and nilpotency class $c$ with $p-1\geq c$ and $G$ be a finite $p$-group with order $p^n$ and the nilpotency class $c\leq p-1$. The Baker-Campbell-Hausdorff formula (B-C-H formula) is a group multiplication in terms of Lie ring operations
$$xy := x+y+{\frac{1}{2}[x,y]_{L}+\frac{1}{12}[x,x,y]_{L}+...},$$
where $x,y\in L$. The inverse $g^{-1}$ of the group element $g$ corresponds to $-g$. and the identity $1$ in the group corresponds to $0$ in the Lie ring. So, the B-C-H formula is used to turn Lie ring presentions into group presentions. Conversely the inverse B-C-H formula is a Lie ring addition and Lie bracket in terms of group multiplication that it is used to turn group presentions into Lie ring presentions. This have the general form
$$x+y := xy{[x,y]_{G}^{\frac{-1}{2}}. ...}$$
$$[x,y]_{L} := [x,y]_{G}{[x,x,y]_{G}^{\frac{1}{2}}. ...}$$
See \cite{5}, when $c\leq 14$.
\\
\\
{\bf{The Lazard correspondence.}} The B-C-H formula and it's inverse give an isomorphism between the category of nilpotent $p$-Lie rings of order $p^n$ and the nilpotency class $c$, provided $p-1\geq c$ and the category of finite $p$-groups of the same order and nilpotency class that it is known as the Lazard correspondence. By using this correspondence, in the same line of investigation, the same results on $p$-groups can be checked on $p$-Lie rings. In the following, we mention some of these correspondences that were proved by Eick in \cite{7}.
\begin{proposition}\cite[Proposition 3]{7}\label{p3.8}
%{\bf{Proposition 3.9.1.[7. Proposition 3]}}
Let $G$ be a finite $p$-group of class at most $p-1$, and $L$ be its Lazard correspondent. Let $X$ be a subset of $G$ and hence of $L$. Then
\renewcommand {\labelenumi}{(\roman{enumi})} 
\begin{enumerate}
\item{There is a Lazard correspondence between the subring ${L_0}\subseteq L$ generated by $X$ and the subgroup ${G_0}\subseteq G$ generated by $X$}.
\item{$L_0$ is an ideal of $L$ if and only if $G_0$ is a normal subgroup of $G$}.
\end{enumerate}
\end{proposition}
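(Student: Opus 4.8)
The plan is to exploit the defining feature of the Lazard correspondence: $G$ and $L$ share one common underlying set, with the group product recovered from the Lie operations through the B-C-H formula and the Lie operations recovered from the group product through its inverse. Because $G$ (equivalently $L$) is nilpotent of class $c\le p-1$, every such expansion is a \emph{finite} sum and all the denominators occurring in it are prime to $p$, hence invertible; these two facts are exactly what make the substitutions below legitimate, and they are the only places where $c\le p-1$ is used. I would present this as a reconstruction of \cite[Proposition 3]{7}.

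\emph{Part (i).} I would first establish the set-theoretic equality $G_0=L_0$ by two inclusions. For $G_0\subseteq L_0$: the subring $L_0$ contains $X$ and is closed under Lie addition and bracket, so by the B-C-H formula the product $xy$ and the inverse $x^{-1}=-x$ of elements of $L_0$ again lie in $L_0$; thus $L_0$ is a subgroup containing $X$, whence $G_0\subseteq L_0$. The reverse inclusion $L_0\subseteq G_0$ is the mirror argument via the inverse B-C-H formula: the sum $x+y$ and the bracket $[x,y]_L$ of elements of $G_0$ are group-words in those elements, so $G_0$ is closed under the Lie operations and contains $X$, giving $L_0\subseteq G_0$. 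Hence $G_0$ and $L_0$ coincide as subsets, and the global correspondence restricts to a Lazard correspondence between them (the common set is a $p$-group of class $\le p-1$ together with the corresponding $p$-Lie ring of the same order and class).

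\emph{Part (ii).} Here I would reduce each side to a commutator-closure condition and then pass between them using (inverse) B-C-H, keeping $G_0=L_0$ from Part (i). Recall that $L_0$ is an ideal iff $[x,g]_L\in L_0$ for all $x\in L_0$, $g\in L$, while $G_0$ is normal iff $g^{-1}xg\in G_0$ for all $x\in G_0$, $g\in G$. Assume first that $L_0$ is an ideal. Conjugation in the group corresponds, through B-C-H, to the adjoint action, so that $g^{-1}xg = x + [x,g]_L + \tfrac{1}{2}\big[[x,g]_L,g\big]_L + \cdots$, a finite sum in which every term past the leading $x$ is an iterated bracket of $x$ with copies of $g$ and therefore lies in $[L_0,L]_L\subseteq L_0$; since $x\in L_0$ and $L_0$ is an additive subgroup, $g^{-1}xg\in L_0=G_0$, so $G_0\trianglelefteq G$. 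Conversely, assume $G_0$ is normal. By the inverse B-C-H formula $[x,g]_L$ (for $x\in L_0=G_0$, $g\in L=G$) is a group-word in the iterated group commutators $[x,g]_G,[x,x,g]_G,\ldots$; normality gives $[x,g]_G=x^{-1}(g^{-1}xg)\in G_0$, and each higher commutator, being built from an element of $G_0$ and elements of $G$, again lies in $G_0$ by normality. Hence $[x,g]_L$ is a product of elements of $G_0=L_0$, so $[x,g]_L\in L_0$ and $L_0$ is an ideal.

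The main obstacle I anticipate lies entirely in Part (ii): justifying that conjugation expands as the adjoint series $e^{\mathrm{ad}}$ (equivalently, that the inverse B-C-H expansion of $[x,g]_L$ involves only iterated commutators with repeated entries from $\{x,g\}$), and that every correction term genuinely lands in the commutator ideal $[L_0,L]_L$, respectively in $G_0$. This is where one must invoke the finiteness coming from nilpotency of class $\le p-1$ together with the invertibility of the B-C-H denominators modulo $p$; once these are in hand the remaining bookkeeping is routine.
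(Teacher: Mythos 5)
The paper offers no proof of this proposition: it is quoted directly from Eick--Horn--Zandi \cite[Proposition 3]{7}, so there is no in-paper argument to compare against. Your reconstruction is correct and is essentially the standard proof of that cited result: the set-theoretic equality $G_0=L_0$ follows from closure of each generated sub-object under the (inverse) B-C-H words, where the hypothesis $c\le p-1$ guarantees the expansions are finite with denominators invertible mod $p$, so the rational coefficients act as integer multiples and integer powers; part (ii) then reduces to translating $[G_0,G]_G\subseteq G_0$ into $[L_0,L]_L\subseteq L_0$ and back. The one step you rightly flag as delicate --- expanding conjugation as the adjoint series --- can be avoided altogether by applying the inverse B-C-H commutator formula $[x,g]_L=[x,g]_G\,[x,x,g]_G^{1/2}\cdots$ and its B-C-H counterpart directly, since every term on either side is an iterated commutator with an entry already known to lie in the sub-object; with that substitution the two directions become symmetric and nothing is missing.
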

\begin{proposition}\cite[Proposition 4]{7}\label{p3.9}
%{\bf{Proposition 3.9.2.[7. Proposition 4]}}
Let $G$ be a finite $p$-group of class at most $p-1$, and $L$ be its Lazard correspondent. Then
\renewcommand {\labelenumi}{(\roman{enumi})} 
\begin{enumerate}
\item{$Z(G)$ and $Z(L)$ coincide as sets and are isomorphic as abelian groups}.
\item{$G'$ and $L^2$ coincide as sets and are in Lazard correspondence}.
\end{enumerate}
\end{proposition}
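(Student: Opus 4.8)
The plan is to exploit the defining feature of the Lazard correspondence: the group $G$ and the Lie ring $L$ share one underlying set, on which the group multiplication is recovered from the Lie operations by the Baker--Campbell--Hausdorff formula and, conversely, the Lie addition and bracket are recovered from the group multiplication by the inverse formula. Since the nilpotency class is at most $p-1$, both series terminate and every rational coefficient occurring in them has denominator prime to $p$, so the formulas are honest maps on the finite $p$-object and may be manipulated freely. The guiding structural observation is that in each direction the expansion has a \emph{leading term} equal to its counterpart: the group commutator $[x,y]_G$ expands as $[x,y]_L$ plus iterated Lie brackets of higher weight, and $[x,y]_L$ expands as $[x,y]_G$ times iterated group commutators of higher weight, with all correction terms lying in the respective derived object.

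For (i) I would first prove $Z(G)=Z(L)$ as sets by checking the two membership conditions against each other. If $z\in Z(L)$ then $[z,g]_L=0$ for every $g$, so the subring generated by $z$ and $g$ is abelian and the B-C-H product collapses to $zg=z+g=gz$; hence $z\in Z(G)$. Conversely, if $z\in Z(G)$ then $[z,g]_G=0$, and feeding this into the inverse formula $[z,g]_L=[z,g]_G[z,z,g]_G^{1/2}\cdots$ makes every factor trivial, so $[z,g]_L=0$ and $z\in Z(L)$. Once the two sets agree, the abelian-group isomorphism is immediate: for $z_1,z_2$ in the common center one has $[z_1,z_2]_L=0$, so the B-C-H product again collapses to $z_1z_2=z_1+z_2$, meaning group multiplication and Lie addition literally coincide on $Z(G)=Z(L)$.

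For (ii) I would show $G'=L^2$ as sets and then read off the correspondence from Proposition \ref{p3.8}. Expanding $[x,y]_G=x^{-1}y^{-1}xy$ through the B-C-H formula (with $x^{-1}=-x$) writes it as a Lie element whose constant and linear parts cancel, so every surviving term is an iterated bracket and thus lies in $L^2$; since $L^2$ is an ideal it is closed under addition and bracket, hence also under the group multiplication recovered from them, so it is a subgroup containing every group commutator and $G'\subseteq L^2$. Symmetrically the inverse formula presents each $[x,y]_L$ as a product of group commutators, putting it in $G'$; as $G'$ is a normal subgroup it is closed under the Lie addition and bracket recovered by the inverse formula, hence a subring containing every Lie bracket, giving $L^2\subseteq G'$. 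Applying Proposition \ref{p3.8}(i) to the set $X=K(G)$ of group commutators, whose generated subgroup is $G'$, the subring it generates coincides with $G'$ as a set and is Lazard-correspondent to it; this subring is $L^2$ by the two inclusions just shown, so $G'$ and $L^2$ coincide as sets and are in Lazard correspondence.

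The main obstacle I anticipate is not the set-theoretic bookkeeping but the justification of the leading-term claim, namely that the B-C-H expansion of $[x,y]_G$ begins with $[x,y]_L$ and has all remaining terms in $[L,L]$, and dually for the inverse expansion. This requires a weight/degree analysis of the free Lie ring on two generators, together with the observation that vanishing of the lowest bracket forces the collapse used in part (i); the hypothesis $c\le p-1$ is exactly what guarantees that the denominators are units and the expansions are finite, so no convergence or torsion issue interferes.
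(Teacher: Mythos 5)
The paper does not prove this proposition at all: it is imported verbatim as \cite[Proposition 4]{7}, so there is no internal proof to compare against. Your argument is essentially the standard one from that source and it is correct. Part (i) works because when $[z,g]_L=0$ every weight-$\ge 2$ term of the B-C-H series for $zg$ and $gz$ vanishes, and conversely when $z\in Z(G)$ every iterated group commutator of weight $\ge 2$ in $z,g$ is trivial, killing each factor of the inverse series for $[z,g]_L$; the coincidence of the two operations on the common centre then gives the abelian-group isomorphism. Part (ii) is also sound: the two inclusions follow from the leading-term structure of the two expansions together with the facts that $L^2$ is a subring (hence closed under the B-C-H product and under $x\mapsto -x$) and $G'$ is a subgroup (hence closed under the inverse-B-C-H sum and bracket), and Proposition \ref{p3.8}(i) applied to $X=K(G)$ upgrades the set equality to a Lazard correspondence. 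The one point you rightly flag as unproved --- that the expansion of $[x,y]_G$ begins with $[x,y]_L$ with all corrections of higher weight, and dually, with all denominators prime to $p$ when $c\le p-1$ --- is exactly the content of the effective Lazard correspondence in \cite{5}, so citing it there closes the only gap.
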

\begin{proposition}\cite[Proposition 5]{7}\label{p3.10}
%{\bf{Proposition 3.9.3.[7. Proposition 5]}}
Let $G$ be a finite $p$-group of class at most $p-1$, and $L$ be its Lazard correspondent. Let $G_0$ be a normal subgroup in $G$ and $L_0$ be the correponding ideal in $L$. Then $\psi : {G}/{G_0} \to {L}/{L_0}$ given by $(xG_0 \longmapsto x+L_0)$, is a well-defined bijection, and it induces a Lazard correspondence between ${G}/{G_0}$ and ${L}/{L_0}$.
\end{proposition}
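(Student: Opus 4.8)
The plan is to exploit the fact that, under the Lazard correspondence, $G$ and $L$ share a single underlying set, on which the group multiplication is given by the B-C-H formula in terms of the Lie operations and, conversely, the Lie operations are recovered from the group multiplication by the inverse B-C-H formula. By Proposition \ref{p3.8}, the normal subgroup $G_0$ and the ideal $L_0$ coincide as subsets of this common set, with $G_0$ normal in $G$ precisely when $L_0$ is an ideal of $L$. The crux of the argument is therefore to show that the partition of the common underlying set into group cosets of $G_0$ agrees with its partition into Lie cosets of $L_0$; once this is established, well-definedness and bijectivity of $\psi$ follow at once, since $\psi$ is then merely the identity map read through two descriptions of one and the same partition.

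To prove that the cosets coincide, I would fix $x$ in the common set and establish $xG_0 = x + L_0$ by two inclusions. For the forward inclusion, take $g_0 \in G_0 = L_0$ and expand the group product via B-C-H, $x g_0 = x + g_0 + \tfrac12[x,g_0]_L + \cdots$. Because the series is finite (the nilpotency class being at most $p-1$), the linear term $g_0$ lies in $L_0$ and every higher term is an iterated Lie bracket having $g_0 \in L_0$ among its entries, so each such term lies in the ideal $L_0$; hence $x g_0 \in x + L_0$. For the reverse inclusion, take $\ell_0 \in L_0 = G_0$ and expand the Lie sum via the inverse formula, $x + \ell_0 = x\,\ell_0\,[x,\ell_0]_G^{-1/2}\cdots$; here the factor $\ell_0$ lies in $G_0$ and every correction factor is an iterated group commutator involving $\ell_0$, which normality of $G_0$ forces back into $G_0$, so $x + \ell_0 \in xG_0$. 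Together these give $xG_0 = x + L_0$ for every $x$, proving the first assertion.

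For the second assertion, I would first note that $G/G_0$ is a finite $p$-group whose class does not exceed that of $G$, hence is at most $p-1$, and that $L/L_0$ is a finite $p$-Lie ring of the same order and class; thus both objects lie in the domain of the Lazard correspondence. It then remains to check that the quotient operations are B-C-H related through $\psi$. Since the Lie operations of $L$ descend to $L/L_0$ and the B-C-H formula is a finite expression (again by nilpotency) in these operations, passing to the quotient commutes with B-C-H: writing $\bar x$ for the image of $x$, one has $\bar x\,\bar y = \overline{xy} = \overline{\mathrm{BCH}(x,y)} = \mathrm{BCH}(\bar x,\bar y)$, and symmetrically the induced Lie operations on $L/L_0$ are recovered from the group multiplication on $G/G_0$ by the inverse formula. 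Hence $\psi$ intertwines the induced structures exactly as the B-C-H formula prescribes, which is precisely the statement that $\psi$ induces a Lazard correspondence between $G/G_0$ and $L/L_0$.

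The step I expect to be the main obstacle is the coset coincidence of the second paragraph: one must argue with care that every correction term in the B-C-H expansion lands in $L_0$, and dually that every correction factor in the inverse expansion lands in $G_0$. This is where the hypotheses that $L_0$ is an ideal and $G_0$ is normal are genuinely used, and where the bound on the nilpotency class is indispensable, as it guarantees that the relevant series terminate so that the term-by-term membership argument is legitimate.
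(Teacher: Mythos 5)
Your argument is correct, and it is the standard proof of this fact. Note that the paper itself offers no proof here --- Proposition \ref{p3.10} is quoted verbatim from Eick, Horn and Zandi \cite{7} --- and your coset-coincidence argument (every B-C-H correction term of degree at least two involves $g_0$ and hence lands in the ideal $L_0$, and dually for the inverse formula and normality of $G_0$) is essentially the argument given in that reference, so there is nothing to object to beyond the routine observation that the rational coefficients in the expansions act invertibly because their denominators are coprime to $p$.
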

Note that similar to the CP cover definition of groups in \cite{10}, for every stem central CP extension $1 \xrightarrow{} N \xrightarrow{}  G \xrightarrow{\pi} Q \xrightarrow{} 1$ with $Q\cong G/N$, $(N,G)$ is termed CP defining pair of $Q$, and $G$ is called CP cover, whenever $|N|=|\tilde{B_0}(Q)|$.
\begin{proposition}\label{p3.11}
%{\bf{Proposition 3.10.}}
Let $G$ be a finite $p$-group of class at most $p-1$, and $L$ be its Lazard correspondent. Then every CP defining pair of $G$ is in the Lazard correspondence with a CP defining pair of $L$ and vice versa.
\end{proposition}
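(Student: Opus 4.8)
The plan is to move a CP defining pair across the Lazard correspondence object-by-object, matching the centre, the derived subobject, normal subgroups with ideals, and quotients by means of Propositions \ref{p3.8}, \ref{p3.9} and \ref{p3.10}, and then matching the commutativity-preserving condition $N\cap K(P)=1$ with $K\cap K(\mathcal P)=0$ by a direct commutator computation coming from the B-C-H formula. I treat the group-to-Lie-ring direction; the converse is entirely symmetric, using the inverse B-C-H formula in place of the B-C-H formula.

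So let $(N,P)$ be a CP defining pair of $G$, i.e. $G\cong P/N$, $N\subseteq Z(P)\cap P'$ and $N\cap K(P)=1$. Before the Lazard correspondence can be applied to $P$ I must know that $P$ has nilpotency class at most $p-1$. Since $N$ is central and $P/N\cong G$ has class $c\le p-1$, one gets $\gamma_{c+1}(P)\subseteq N\subseteq Z(P)$ and hence $\gamma_{c+2}(P)=1$, so that $\mathrm{cl}(P)\le c+1$. This is the main obstacle: a priori the bound is only $c+1\le p$, one step beyond the Lazard range, and it must be improved (or the ambient correspondence enlarged) before proceeding. I would resolve this by analysing the top term $\gamma_{c+1}(P)\subseteq N\cap P'$ under the CP hypothesis, or, failing that, by restricting to the range in which the cover provably stays of class at most $p-1$; granting this, write $\mathcal P$ for the Lazard correspondent of $P$.

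The first two defining-pair conditions then transfer for free. By Proposition \ref{p3.8} the central normal subgroup $N$ corresponds to an ideal $K\trianglelefteq\mathcal P$ with $N=K$ as sets, and by Proposition \ref{p3.10} the quotient $\mathcal P/K$ is the Lazard correspondent of $P/N\cong G$; since the correspondent of $G$ is $L$ and the correspondence is a bijection of categories, $\mathcal P/K\cong L$. By Proposition \ref{p3.9}, $Z(P)=Z(\mathcal P)$ and $P'=\mathcal P^2$ as sets, so $N\subseteq Z(P)\cap P'$ becomes $K\subseteq Z(\mathcal P)\cap\mathcal P^2$ verbatim. It remains to match the CP condition, which rests on the following lemma: for $a,b\in P=\mathcal P$, the group commutator $[a,b]_G$ lies in $Z(P)$ if and only if the Lie commutator $[a,b]_{\mathcal P}$ lies in $Z(\mathcal P)$, and in that case the two coincide as elements. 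Indeed, when $[a,b]_{\mathcal P}$ is central every higher-weight correction term in the B-C-H expansion of $[a,b]_G$ is a Lie bracket containing the central element $[a,b]_{\mathcal P}$ and so vanishes, giving $[a,b]_G=[a,b]_{\mathcal P}$; concretely $[e^{a},e^{b}]_G=e^{[a,b]_{\mathcal P}}$ once $[a,b]_{\mathcal P}$ is central. Symmetrically, if $[a,b]_G$ is central then the higher iterated group-commutators $[a,[a,b]_G]_G,\dots$ appearing in the inverse B-C-H expansion of $[a,b]_{\mathcal P}$ vanish, giving $[a,b]_{\mathcal P}=[a,b]_G$. Since $N=K$ is contained in $Z(P)=Z(\mathcal P)$, the lemma identifies $N\cap K(P)$ with $K\cap K(\mathcal P)$ (with $1$ matched to $0$), so one is trivial exactly when the other is. Thus $(K,\mathcal P)$ is a CP defining pair of $L$ Lazard-correspondent to $(N,P)$, and running the same argument backwards proves the converse.

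I expect the genuine difficulty to be the class control of the cover described above; the structural part of the commutator lemma — that every higher term of the (inverse) B-C-H commutator series lies in the ideal generated by $[a,b]$ and hence dies when $[a,b]$ is central — is standard but should be recorded carefully, since it is exactly what makes the set-level identifications $N=K$, $Z(P)=Z(\mathcal P)$ and $P'=\mathcal P^2$ compatible with the commutator sets $K(P)$ and $K(\mathcal P)$.
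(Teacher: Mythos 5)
Your route is the same as the paper's: push the pair across the correspondence using Propositions \ref{p3.8}, \ref{p3.9} and \ref{p3.10}, and match the commutativity\-/preserving condition at the level of commutators. The one step you leave open --- controlling the nilpotency class of the cover $P$ --- is closed by exactly the analysis you gesture at and should not be deferred: $\gamma_{c+1}(P)=[\gamma_c(P),P]$ is generated by group commutators, each of which lies in $\gamma_{c+1}(P)\subseteq N$ (because $P/N\cong G$ has class $c$) and in $K(P)$, hence in $N\cap K(P)=1$; therefore $\gamma_{c+1}(P)=1$ and $cl(P)\le c\le p-1$, with no need to enlarge the correspondence or restrict the range. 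Note that the paper asserts the same bound ($cl(G^*)<cl(G)+1$) with no justification, so your instinct to flag it as the real content of the proof is sound. Your commutator lemma --- that when $[a,b]_{\mathcal{P}}$ is central every higher B-C-H term dies, so the group and Lie commutators coincide as elements --- is correct, granted the standard fact that every Lie monomial of weight at least $3$ in $a,b$ is a combination of brackets having $[a,b]$ as an inner factor; it gives a more explicit justification of the transfer of $N\cap K(P)=1$ into $K\cap K(\mathcal{P})=0$ than the paper, which simply invokes ``Proposition \ref{p3.8} and the Lazard correspondence.'' With the class bound filled in as above, your argument is complete and essentially coincides with the paper's.
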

\begin{proof}
Suppose $(G^*,G_0)$ is a CP defining pair of $G$ with $G\cong {G^*}/{G_0}$. Then $cl(G^*)<cl(G)+1\leq p-1+1=p$. So, $cl(G^*)\leq p-1$ and there is a Lie ring $L^*$ in the Lazard correspondence with $G^*$. Since $G_0 \subseteq (G^*)'\cap Z(G^*)$ and $G_0 \cap K(G^*)=1$, by Proposition \ref{p3.8} and the Lazard correspondence, there is a central ideal $L_0$ of $L^*$ such that $G_0$ and $L_0$ are in the Lazard correspondence, $L_0\subseteq (L^*)^2 \cap Z(L^*)$ and $L_0 \cap K(L^*)=0$. Now, Proposition \ref{p3.10} shows that $G\cong {G^*}/{G_0}$ and ${L^*}/{L_0}$ are in the Lazard correspondence. So, $L\cong {L^*}/{L_0}$. Hence, $(L^*,L_0)$ is a unique CP defining pair of $L$. And the converse prove follow similarly.
\end{proof}
We know that there are many invariants between $G$ and its Lazard correspondent. Now, we want to introduce another instance of these invariants.
\begin{theorem}\label{3.12}
%{\bf{Theorem 3.11.}}
Let $G$ be a finite $p$-group of class at most $p-1$, and $L$ be its Lazard correspondent. Then
\renewcommand {\labelenumi}{(\roman{enumi})} 
\begin{enumerate}
\item{The isomorphism types of CP covers of $G$ are in  the Lazard correspondence with the isomorphism types of CP covers of $L$ and vice versa}.
\item{The Bogomolov multipliers of $G$ and $L$ are isomorphic as abelian groups}.
\end{enumerate}
\end{theorem}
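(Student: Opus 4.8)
The plan is to derive both statements from the correspondence between CP defining pairs already established in Proposition \ref{p3.11}, using two structural features of the Lazard correspondence: it preserves order, and it is an equivalence of categories (so it carries isomorphisms to isomorphisms). For part (i) I would first recall that a CP cover is exactly the first coordinate of a \emph{maximal} CP defining pair. By Proposition \ref{p3.11} every CP defining pair $(G^*,G_0)$ of $G$ corresponds to a CP defining pair $(L^*,L_0)$ of $L$ with $G^*$ and $L^*$ in Lazard correspondence, and conversely. Since Lazard correspondents have the same order, $|G^*|=|L^*|$, so $(G^*,G_0)$ maximizes $|G^*|$ if and only if $(L^*,L_0)$ maximizes $|L^*|$. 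Hence the bijection of Proposition \ref{p3.11} restricts to a bijection between maximal CP defining pairs, and therefore between CP covers. Because the correspondence respects isomorphisms (Propositions \ref{p3.8}--\ref{p3.10}), this descends to a bijection between the isomorphism types of CP covers of $G$ and those of $L$, proving (i). In particular, although a group may in general admit several CP covers, the uniqueness of the CP cover on the Lie side forces uniqueness here as well.

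For part (ii) I would combine Lemma \ref{l3.5} with its group counterpart \cite[Theorem 4.2]{10}: if $(G^*,G_0)$ and $(L^*,L_0)$ are maximal CP defining pairs, then $G_0\cong\tilde{B_0}(G)$ and $L_0\cong\tilde{B_0}(L)$. Choosing the two pairs to be Lazard correspondents as in part (i), the kernels $G_0$ and $L_0$ are themselves in Lazard correspondence. The key observation is that $G_0\subseteq Z(G^*)$ and $L_0\subseteq Z(L^*)$, so $G_0$ is an abelian subgroup while $L_0$ is a \emph{central} ideal, whence $[L_0,L^*]=0$ and in particular every Lie bracket of elements of $L_0$ vanishes. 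On such a central piece the Baker--Campbell--Hausdorff formula collapses to $xy=x+y$, so the group operation on $G_0$ coincides with the additive operation on $L_0$, giving $G_0\cong L_0$ as abelian groups. Chaining these isomorphisms yields $\tilde{B_0}(G)\cong G_0\cong L_0\cong\tilde{B_0}(L)$ as abelian groups, which is exactly (ii).

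The main obstacle I anticipate is the last step: verifying that the Lazard correspondence restricts to a genuine isomorphism of abelian groups on the central kernels. One must check carefully that \emph{all} higher bracket terms in the B--C--H expansion vanish on $G_0$, which uses that $L_0$ is central in $L^*$, not merely that it is abelian as a ring in isolation. A secondary point requiring care is confirming that every CP cover of $G$ lies within the Lazard range, i.e.\ has nilpotency class at most $p-1$; this follows as in the proof of Proposition \ref{p3.11} from the class bound $cl(G^*)\le cl(G)+1\le p$, so that $cl(G^*)\le p-1$ and the Lazard correspondent $L^*$ is defined.
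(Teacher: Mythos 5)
Your proof is correct and follows essentially the same route as the paper: the paper's own proof of Theorem~\ref{3.12} is only the one-sentence remark that the result is ``a direct consequence of Proposition~\ref{p3.11} and the B-C-H formula,'' and your argument is precisely the detailed expansion of that remark (order-preservation of the Lazard correspondence to match maximal CP defining pairs, and the collapse of the Baker--Campbell--Hausdorff series on the central kernel to identify the multipliers as abelian groups). Nothing further is needed.
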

\begin{proof}
Given that CP covers and Bogomolov multipliers are the first and second components of the maximal CP defining pairs of groups and Lie rings, respectively, this is a direct consequence of the Proposition \ref{p3.11} and the B-C-H formula.
\end{proof}
{\bf{Example 3.11.}}
We consider a finite $p$-group $G_{1p}$ of order $p^{5}$ with the nilpotency class $3$ and the following presention
$$G_{1p}=<g,g_1,g_2,g_3\ |\   [g_1,g]=g_2 , [g_2,g]=g^{p^{2}}=g_3 , g^{p}_1=g^{p}_2=g^{p}_3=1>$$
Moravec in \cite{19} showed that $\tilde{B_0}(G_{1p})=0$. For $p\geq 5$ these groups are in the Lazard correspondence with finite $p$-Lie ring $L_{1p}$ of the same order and nilpotency class. For fixed prime $p$, the method of \cite{7} can be used to determine the Lie ring presention for $L_{1p}$ with $p$ as parameter. Let $F_{1p}$ be a free Lie ring on $v,v_1,v_2,v_3$, and denote presentations of $L_{1p}$ as ${F_{1p}}/{R_{1p}}$. So,
$$L_{1p}=<v,v_1,v_2,v_3 \ |\  [v_1,v]=v_{2}-{{p^2}v}/{2} ={v_2}-{v_3}/{2}, [v_2,v]={p^{2}}v+{{p^4}v}/{2}=v_3,$$
$$ p{v_1}=p{v_2}=p{v_3}=0>.$$
The Moravec's method in \cite{20} may used to determine the Bogomolov multiplier for a polycyclic group translates to finite $p$-Lie ring. Now we use this method to determine the Bogomolov multiplier of $L_{1p}$. Based on the above presention, we have
$$L_{1p}\wedge L_{1p}=<v\wedge {v_1} , v\wedge {v_2} , v\wedge {v_3} , {v_1}\wedge {v_2} , {v_1}\wedge {v_3} , {v_2}\wedge {v_3}>.$$
Hence for all $w\in \mathcal{M}(L_{1p})\leq L_{1p}\wedge L_{1p}$ there exist ${\alpha}_1,\ldots,{\alpha}_6 \in {\Bbb{Z}}_{p^k}$, such that $w={{\alpha}_1}(v\wedge {v_1})+{{\alpha}_2}(v\wedge {v_2})+{{\alpha}_3}(v\wedge {v_3})+{{\alpha}_4}({v_1}\wedge {v_2})+{{\alpha}_5}({v_1}\wedge {v_3})+{{\alpha}_6}({v_2}\wedge {v_3})$.
\\
Consider $\tilde{\kappa} : L_{1p}\wedge L_{1p} \to {L^{2}_{1p}}$. $\tilde{\kappa}(w)=0$, and
$${{\alpha}_1}[v,{v_1}]+{{\alpha}_2}[v,{v_2}]+{{\alpha}_3}[v,{v_3}]+{{\alpha}_4}[{v_1},{v_2}]+{{\alpha}_5}[{v_1},{v_3}]+{{\alpha}_6}[{v_2},{v_3}]=0.$$
So, ${{\alpha}_1}{v_2}+({{\alpha}_2}-{{\alpha}_1}/{2}){v_3}=0$. Hence ${{\alpha}_1}={{\alpha}_2}=0$. Therefore
$$w={{\alpha}_3}(v\wedge {v_3})+{{\alpha}_4}({v_1}\wedge {v_2})+{{\alpha}_5}({v_1}\wedge {v_3})+{{\alpha}_6}({v_2}\wedge {v_3}).$$
On the other hand, $[v,v_3]=[v_1,v_2]=[v_1,v_3]=[v_2,v_3]=0$. Thus
\\
 $(v\wedge {v_3}) , ({v_1}\wedge {v_2}) , ({v_1}\wedge {v_3}) , ({v_2}\wedge {v_3}) \in \mathcal{M}_0(L_{1p})$. So, $w\in \mathcal{M}_0(L_{1p})$ and
 \\
  $\mathcal{M}(L_{1p})\subseteq \mathcal{M}_0(L_{1p})$. Hence $\tilde{B_0}(L_{1p})=0$.
  \\
  \\
{\bf{Example 3.12.}}
We consider a finite $p$-group $G_{2p}$ of order $p^{6}$ with the nilpotency class $3$ given the following presention
$$G_{2p}=<g,g_1,g_2,g_3,g_4,g_5\ |\  [g_1,g_2]=g_3 , [g_3,g_1]=g_4 , [g_3,g_2]=g_5 , [g,g_1]=g_4,$$ $${g^{p}_1}={g^{p}_2}={g^{p}_3}={g^{p}_4}={g^{p}_5}={g}^p=1>$$
Chen and Ma in \cite{6} showed that $\tilde{B_0}(G_{2p})=0$. For $p\geq 5$ these groups are in the Lazard correspondence with the finite $p$-Lie ring $L_{2p}$ of the same order and nilpotency class. For fixed prime $p$, the method of \cite{7} can be used to determine the Lie ring presention for $L_{2p}$ with $p$ as parameter. Let $F_{2p}$ be a free Lie ring on $v,v_1,\ldots,v_5$, and denote presentation of $L_{2p}$ as ${F_{2p}}/{R_{2p}}$. So,
$$L_{2p}=<v,v_1,v_2,v_3,v_4,v_5 \ |\ [v_1,v_2]=v_{3}-{v_4}/{2}-{v_5}/{2} , [v_3,v_1]=v_4 , [v_3,v_2]=v_5 ,$$
$$[v,v_1]=v_4 , p{v_1}=p{v_2}=p{v_3}=p{v_4}=p{v_5}=0>.$$
By Moravec's method to determine the Bogomolov multiplier for a polycyclic group translate to finite $p$-Lie ring. We use this to determine the Bogomolov multiplier of $L_{2p}$. Based on the above presention we have
$$L_{2p}\wedge L_{2p} = <v\wedge {v_1},v\wedge {v_2},v\wedge {v_3},v\wedge {v_4},v\wedge {v_5},{v_1}\wedge {v_2},$$
$${v_1}\wedge {v_3},{v_1}\wedge {v_4},{v_1}\wedge {v_5},{v_2}\wedge {v_3},{v_2}\wedge {v_4},{v_2}\wedge {v_5},{v_3}\wedge {v_4},{v_3}\wedge {v_5},{v_4}\wedge {v_5}>.$$
For all $w\in M(L_{2p})\leq L_{2p}\wedge L_{2p}$, there exists ${{\alpha}_1},\ldots ,{{\alpha}_{15}} \in {\Bbb{Z}}_{p^k}$, such that
$$w={{\alpha}_1}(v\wedge {v_1})+{{\alpha}_2}(v\wedge {v_2})+{{\alpha}_3}(v\wedge {v_3})+{{\alpha}_4}(v\wedge {v_4})+{{\alpha}_5}(v\wedge {v_5})+{{\alpha}_6}({v_1}\wedge {v_2})+{{\alpha}_7}({v_1}\wedge {v_3})$$
$$+{{\alpha}_8}({v_1}\wedge {v_4})+{{\alpha}_9}({v_1}\wedge {v_5})+{{\alpha}_{10}}({v_2}\wedge {v_3})+{{\alpha}_{11}}({v_2}\wedge {v_4})+{{\alpha}_{12}}({v_2}\wedge {v_5})+{{\alpha}_{13}}({v_3}\wedge {v_4})$$
$$+{{\alpha}_{14}}({v_3}\wedge {v_5})+{{\alpha}_{15}}({v_4}\wedge {v_5})$$
Let $\tilde{\kappa} : L_{2p}\wedge L_{2p} \to {{L^{2}_{2p}}}$. Then we have $\tilde{\kappa}(w)=0$ and
$${{\alpha}_1}[v,{v_1}]+{{\alpha}_2}[v,{v_2}]+{{\alpha}_3}[v,{v_3}]+{{\alpha}_4}[v,{v_4}]+{{\alpha}_5}[v,{v_5}]+{{\alpha}_6}[{v_1},{v_2}]+{{\alpha}_7}[{v_1},{v_3}]$$
$$+{{\alpha}_8}[{v_1},{v_4}]+{{\alpha}_9}[{v_1}, {v_5}]+{{\alpha}_{10}}[{v_2},{v_3}]+{{\alpha}_{11}}[{v_2},{v_4}]+{{\alpha}_{12}}[{v_2},{v_5}]+{{\alpha}_{13}}[{v_3},{v_4}]+{{\alpha}_{14}}[{v_3},{v_5}]$$
$$+{{\alpha}_{15}}[{v_4},{v_5}]=0,$$
So, $({{\alpha}_1}-{{\alpha}_6}/{2}-{{\alpha}_7}){v_4}+{{\alpha}_6}{v_3}+(-{{\alpha}_6}/{2}-{{\alpha}_{10}}){v_5}=0$. Thus, ${\alpha}_6={\alpha}_{10}=0$ and ${\alpha}_1={\alpha}_7$. Therefore
$$w={{\alpha}_1}((v\wedge {v_1})+({v_1}\wedge {v_3}))+{{\alpha}_2}(v\wedge {v_2})+{{\alpha}_3}(v\wedge {v_3})+{{\alpha}_4}(v\wedge {v_4})+{{\alpha}_5}(v\wedge {v_5})+{{\alpha}_8}({v_1}\wedge {v_4})+$$
$${{\alpha}_9}({v_1}\wedge {v_5})+{{\alpha}_{11}}({v_2}\wedge {v_4})+{{\alpha}_{12}}({v_2}\wedge {v_5})+{{\alpha}_{13}}({v_3}\wedge {v_4})+{{\alpha}_{14}}({v_3}\wedge {v_5})+{{\alpha}_{15}}({v_4}\wedge {v_5})$$
On the other hand, $(v\wedge {v_2})$ , $(v\wedge {v_3})$, $(v\wedge {v_4})$ , $(v\wedge {v_5})$ , $({v_1}\wedge {v_4})$ , $({v_1}\wedge {v_5})$ , $({v_2}\wedge {v_4})$ , $({v_2}\wedge {v_5})$ , $({v_3}\wedge {v_4})$ , $({v_3}\wedge {v_5})$ , $({v_4}\wedge {v_5})$ belong to the $M_0(L_{2p})$. We can see that, $[v+{v_1} , {v_1}+{v_3}]=0$. So, $(v+{v_1})\wedge ({v_1}+{v_3}) \in \mathcal{M}_0(L_{2p})$. Hence $(v\wedge {v_1})+(v\wedge {v_3})+({v_1}\wedge {v_1})+({v_1}\wedge {v_3}) \in \mathcal{M}_0(L_{2p})$. Thus, $((v\wedge {v_1})+({v_1}\wedge {v_3})) \in \mathcal{M}_0(L_{2p})$ and $w \in \mathcal{M}_0(L_{2p})$. Therefore $\mathcal{M}(L_{2p}) \subseteq \mathcal{M}_0(L_{2p})$ and $\tilde{B_0}(L_{2p})=0$, as required.
\\
\\

\end{document}